\titlespacing\section{0pt}{12pt plus 4pt minus 2pt}{0pt plus 2pt minus 2pt}
\titlespacing\subsection{0pt}{12pt plus 4pt minus 2pt}{0pt plus 2pt minus 2pt}
\titlespacing\subsubsection{0pt}{0pt plus 2pt minus 2pt}{0pt plus 2pt minus 2pt}
\titleformat{\section}[block]{\Large\bfseries\scshape\filcenter}{\thesection.}{1ex}{}
\titleformat{\subsection}{\large\scshape\filcenter}{\thesubsection}{1ex}{}
\NewDocumentCommand{\whiten}{ m }
    {
      \int_step_function:nnnN {1}{1}{#1} \white_text:n
    }
\NewDocumentCommand{ \varul }{ D<>{5} O{0.2ex} O{0.1ex} +m } {%
\begingroup
\setul{#2}{#3}%
\def\SOUL@uleverysyllable{%
   \setbox0=\hbox{\the\SOUL@syllable}%
   \ifdim\dp0>\z@
      \SOUL@ulunderline{\phantom{\the\SOUL@syllable}}%
      \whiten{#1}%
      \llap{%
        \the\SOUL@syllable
        \SOUL@setkern\SOUL@charkern
      }%
   \else
       \SOUL@ulunderline{%
         \the\SOUL@syllable
         \SOUL@setkern\SOUL@charkern
       }%
   \fi}%
    \ul{#4}%
\endgroup
}
      \string\usetikzlibrary{decorations.markings} to use arrows with markings}{}}{}%
\newtheoremstyle{thms}{1em}{0pt}{\itshape}{}{\bfseries}{.}{ }{}
\theoremstyle{thms}
\newtheorem{Thm}{Theorem}[section]				%Theorem
\newaliascnt{Prop}{Thm}							%Proposition
\newtheorem{Prop}[Prop]{Proposition}
\newaliascnt{Lemma}{Thm}						%Lemma
\newtheorem{Lemma}[Lemma]{Lemma}
\newaliascnt{Cor}{Thm}							%Corollary
\newtheorem{Cor}[Cor]{Corollary}
\newaliascnt{Conj}{Thm}							%Conjecture
\newaliascnt{Question}{Thm}						%Question
\newtheoremstyle{defs}{1em}{0pt}{}{}{\bfseries}{.}{ }{}
\theoremstyle{defs}
\newaliascnt{Rmk}{Thm}							%Remark
\newtheorem{Rmk}[Rmk]{Remark}
\newaliascnt{Fact}{Thm}							%Fact
\newaliascnt{Def}{Thm}							%Definition
\newtheorem{Def}[Def]{Definition}
\newaliascnt{Ex}{Thm}							%Example
\newtheorem{Ex}[Ex]{Example}
\newaliascnt{Con}{Thm}							%Construction
\newaliascnt{Not}{Thm}							%Notation
\newaliascnt{Setup}{Thm}						%Setup
\newaliascnt{Picture}{Thm}						%Picture
\theoremstyle{thms}
\newtheorem{thm}{Theorem}
\newtheorem*{thm*}{Theorem}
\newtheorem*{lemma*}{Lemma}
\LetLtxMacro\oldproof\proof						%Proof without extra skip
\renewcommand{\proof}[1][Proof]{\oldproof[#1]\unskip} %we're not doing this anymore?
\newenvironment{itemize*} %no longer needed because of \setlist below?
  {\begin{itemize}
    \setlength{\itemsep}{1em}
    \setlength{\parskip}{-1em}
    \setlength{\topsep}{0pt}
    \setlength{\partopsep}{0pt}}
  {\end{itemize}}
\newenvironment{enumerate*}
  {\begin{enumerate}
    \setlength{\itemsep}{1em}
    \setlength{\parskip}{-1em}
    \setlength{\topsep}{0pt}
    \setlength{\partopsep}{0pt}}
  {\end{enumerate}}
\setlist{itemsep=0em,topsep=0cm,partopsep=0em,parsep=\lineskip}
\setlist[enumerate]{label=\normalfont(\arabic*)}
\setlist[itemize]{leftmargin=1.3em}
\newcommand{\PIC}{\mathbf{Pic}}
\newcommand{\Lie}{\operatorname{Lie}}
\newcommand{\HOM}{\mathbf{Hom}}
\newcommand{\Spec}{\operatorname{Spec}}
\newcommand{\Var}{\mathbf{Var}}
\newcommand{\Rig}{\mathbf{Rig}}
\newcommand{\Bl}[1]{\operatorname{Bl}_{#1}}
\newcommand{\im}{\operatorname{im}}
\newcommand{\rk}{\operatorname{rk}}
\newcommand{\pt}{\operatorname{pt}}
\newcommand{\fppf}{_{\operatorname{fppf}}}
\newcommand{\Fr}{\operatorname{Fr}}
\newcommand{\balpha}{\pmb\alpha}
\newcommand{\bmu}{\pmb\mu}
\newcommand{\ra}{\rightarrow}
\newcommand{\rA}{\longrightarrow}
\newcommand{\Ra}{\Rightarrow}
\newcommand{\LRa}{\Leftrightarrow}
\newcommand{\punct}[1]{\makebox[0pt][l]{\,#1}} %so that full stops do not mess with tikz layout
\newcommand{\Z}{\mathbf Z}
\newcommand{\Q}{\mathbf Q}
\newcommand{\C}{\mathbf C}
\newcommand{\F}{\mathbf F}
\renewcommand{\P}{\mathbf P}
\newcommand{\A}{\mathbf A}
\newcommand{\G}{\mathbf G}
\newcommand{\hdR}{h_{\operatorname{dR}}}
\newcommand{\HdR}{H_{\operatorname{dR}}}
\newcommand{\HDR}{\mathcal{HDR}}
\newcommand{\DR}{\mathcal{DR}}
\newcommand{\dr}{\operatorname{dR}}
\renewcommand{\H}{\mathcal{H}}
\newcommand{\I}{\mathcal{I}}
\begin{document}

\renewcommand{\sectionautorefname}{Section}
\renewcommand{\subsectionautorefname}{Subsection}		%Subsection

\begin{center}
\vspace*{-2em}
\noindent\makebox[\linewidth]{\rule{14cm}{0.4pt}}
\vspace{-.5em}

{\LARGE{\textsc{\textbf{The Hodge ring of varieties\\[.3em] in positive characteristic}}}}

\vspace{1.2em}

{\large{\textsc{Remy van Dobben de Bruyn}}}

\vspace{.25em}
\rule{8cm}{0.4pt}
\vspace{3em}
\end{center}

\renewcommand{\abstractname}{\small\bfseries\scshape Abstract}

\begin{abstract}\noindent
Let $k$ be a field of positive characteristic. We prove that the only linear relations between the Hodge numbers $h^{i,j}(X) = \dim H^j(X,\Omega_X^i)$ that hold for \emph{every} smooth proper variety $X$ over $k$ are the ones given by Serre duality. We also show that the only linear combinations of Hodge numbers that are birational invariants of $X$ are given by the span of the $h^{i,0}(X)$ and the $h^{0,j}(X)$ (and their duals $h^{i,n}(X)$ and $h^{n,j}(X)$). The corresponding statements for compact K\"ahler manifolds were proven by Kotschick and Schreieder \cite{KS}. %Some of our results also hold for $p$-adic rigid analytic varieties.
\end{abstract}

%\vspace{0em}

\phantomsection
\section*{Introduction}
The Hodge numbers $h^{i,j}(X) = \dim H^j(X,\Omega^i_X)$ of a compact K\"ahler manifold~$X$ satisfy the relations $h^{i,j} = h^{n-i,n-j}$ (Serre duality) and $h^{i,j} = h^{j,i}$ (Hodge symmetry). Kotschick and Schreieder showed \cite[Thm.~1,~consequence~(2)]{KS} that these are the only \emph{universal} linear relations between the $h^{i,j}$, i.e.\ the only ones that are satisfied for every compact K\"ahler manifold $X$ of dimension $n$.

Serre constructed smooth projective varieties in characteristic $p > 0$ for which Hodge symmetry fails \cite[Prop.\ 16]{SerMex}; however Serre duality still holds. Thus, the natural analogue of the result of Kotschick--Schreieder is the following.

\begin{thm}\label{Thm Hodge}
The only universal linear relations between the Hodge numbers $h^{i,j}(X)$ of smooth proper varieties $X$ over an arbitrary field $k$ of characteristic $p > 0$ are the ones given by Serre duality.
\end{thm}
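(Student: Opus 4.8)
The plan is to package the Hodge numbers of an $n$-dimensional $X$ into the polynomial $E(X) = \sum_{i,j} h^{i,j}(X)\,u^iv^j$ and to study the set of all such polynomials as $X$ varies. Since coherent cohomology satisfies the Künneth formula over a field and $\Omega^m_{X\times Y} = \bigoplus_{a+b=m}\Omega^a_X\boxtimes\Omega^b_Y$, one gets $E(X\times Y) = E(X)E(Y)$, while disjoint unions give addition. Hence the $\Q$-spans $V_n = \langle E(X): \dim X = n\rangle_\Q$ assemble into a graded ring $V=\bigoplus_n V_n$ under the convolution product $V_a\otimes V_b\to V_{a+b}$ induced by multiplication of polynomials (the grading records the dimension $n$, not the polynomial degree). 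Serre duality says precisely that each $E(X)$ with $\dim X = n$ lies in the subspace $S_n$ of polynomials with $P(u,v) = (uv)^nP(u^{-1},v^{-1})$, and these assemble into a graded ring $S=\bigoplus_n S_n$ with $V\subseteq S$. The universal relations in dimension $n$ are the annihilator of $V_n$ and the Serre relations are the annihilator of $S_n$, so the theorem is equivalent to the ring-theoretic equality $V = S$.

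I would then determine $S$ explicitly. Let $\tau$ be the involution $u\leftrightarrow v$ (transposition of Hodge indices) and decompose $S = S^+\oplus S^-$ into $\tau$-invariants and anti-invariants. A Burnside count shows that $\dim S^+_n$ is the number of orbits of $\{0,\dots,n\}^2$ under the Klein four-group generated by $\tau$ and Serre duality $(i,j)\mapsto(n-i,n-j)$, and one checks that $S^+$ is the polynomial ring $\Q[P_1,Q_1,f_5]$ on $P_1 = 1+uv$ and $Q_1 = u+v$ (degree $1$) and $f_5 = uv$ (degree $2$); this is the characteristic-free form of the Kähler computation of \cite{KS}. Comparing Hilbert series then gives $\mathrm{HS}(S^-) = q^2\,\mathrm{HS}(S^+)$, so $S^-$ is free of rank one over $S^+$, generated by the single anti-invariant class $g = u + uv^2 - v - u^2v\in S^-_2$. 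In particular $S = \Q[P_1,Q_1,f_5,g]$ with one relation expressing $g^2\in S^+_4$, and the whole problem reduces to realising these four generators inside $V$.

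It remains to exhibit the generators. The symmetric ones come from varieties over the prime field $\F_p$ with characteristic-independent Hodge numbers: $E(\P^1) = P_1$; an elliptic curve $C$ gives $E(C) = P_1 + Q_1$, so $Q_1\in V_1$; and $E(\P^2) = 1 + uv + u^2v^2$ gives $f_5 = P_1^2 - E(\P^2)\in V_2$. Thus $S^+\subseteq V$. For the last generator I would invoke Serre's variety \cite{SerMex} with $h^{1,0}\ne h^{0,1}$: its class $E(Y)\in V_2$ has symmetric part in $S^+_2\subseteq V_2$, so its anti-invariant part — a nonzero multiple of $g$ — also lies in $V_2$, giving $g\in V_2$. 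As $V$ is a subring containing $P_1,Q_1,f_5,g$, we conclude $V\supseteq S$, hence $V = S$. Finally, flat base change leaves all $h^{i,j}$ unchanged, so an example over $\F_p$ (or over its finite field of definition) base-changes to give the same relations over an arbitrary field $k$ of characteristic $p$.

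The main obstacle is the genuinely characteristic-$p$ input: producing the asymmetric class $g$, i.e.\ an example with $h^{1,0}\ne h^{0,1}$, and doing so over a field from which one can descend to or base-change into an arbitrary $k$ (base change can only enlarge the field, so some care is needed to place Serre's example over the prime field). Everything else is either a finite combinatorial computation — the structure of $S$ and the matching of Hilbert series — or standard functoriality of coherent cohomology. The striking point that makes the argument short is that $S^-$ is free of rank one over $S^+$ generated in degree $2$, so a \emph{single} surface violating Hodge symmetry already corrects the complex answer to the full Serre-duality span.
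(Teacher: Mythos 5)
Your argument is correct in substance and reaches the same destination as the paper --- the ring of Serre-duality-symmetric polynomials is generated by $\P^1$, an elliptic curve, $\P^2$, and one Hodge-asymmetric surface --- but the route to the ring structure is genuinely different. The paper (\autoref{Thm Hodge ring presentation}) writes down an explicit presentation $\Z[A,B,C,D]/(G)$ with $G = D^2 - ABD + C(A^2+B^2-4C)$, matches ranks degree by degree, and proves injectivity modulo every prime $\ell$ via irreducibility of $G$ and a Jacobian computation; this yields the integral statement and hence also the mod~$m$ congruence version. You instead decompose $S = S^+ \oplus S^-$ under the transposition $u \leftrightarrow v$, quote the Kotschick--Schreieder computation $S^+ = \Q[P_1,Q_1,f_5]$, and identify $S^-$ by a Hilbert series count. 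That is cleaner over $\Q$ and suffices for the theorem as stated, but note two things. First, the step ``$\mathrm{HS}(S^-) = q^2\,\mathrm{HS}(S^+)$, so $S^-$ is free of rank one over $S^+$'' is not a formal consequence of the Hilbert series alone: you must also observe that $gS^+ \subseteq S^-$ and that multiplication by $g$ is injective (which holds since everything sits inside the domain $\Q[u,v]$), whence $gS^+_{n-2} = S^-_n$ by dimension count. Second, and more importantly, the ``main obstacle'' you flag is a real one: the theorem is over an \emph{arbitrary} field $k$ of characteristic $p$, and base change only enlarges the field, so you genuinely need the asymmetric surface over $\F_p$ itself. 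This is not a formality --- it is the content of the paper's \autoref{Prop example Hodge symmetry}, which redoes Serre's construction \cite{SerMex} as a $\Z/p$-quotient of a complete intersection cut out by Poonen's Bertini theorem over the prime field. Your proof is complete only once that input is supplied; with it, everything else you write goes through.
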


This is proven in \autoref{Thm linear relations Hodge-de Rham}. The strategy is identical to that of \cite{KS}: compute the subring of $\Z[x,y,z]$ generated by the formal Hodge polynomials
\[
h(X) = \left(\sum_{i,j} h^{i,j}(X)x^i y^j\right)z^{\dim X}.
\]
The term $z^{\dim X}$ is new in \cite{KS}, and its introduction ensures that varieties of different dimensions cannot be identified with each other. It defines a grading on $\Z[x,y,z]$ by dimension, i.e.\ $\Z[x,y,z]_n = \Z[x,y]z^n$.

In characteristic $0$, Kotschick and Schreieder prove \cite[Cor.\ 3]{KS} that the graded subring of $\Z[x,y,z]$ given in degree $n$ by the equations $h^{i,j} = h^{n-i,n-j}$ and $h^{i,j} = h^{j,i}$ is the free algebra $\Z[h(\P^1),h(E),h(\P^2)]$ on the Hodge polynomials of $\P^1$, an elliptic curve $E$, and~$\P^2$.

We prove the natural analogue in characteristic $p$:

\begin{thm}\label{Thm Hodge ring}
Let $\H \subseteq \Z[x,y,z]$ be the graded subring defined by
\[
\H_n = \left\{\left(\sum_{i,j=0}^n h^{i,j}x^iy^j\right)z^n\ \Bigg|\ h^{i,j} = h^{n-i,n-j} \text{ for all } i,j\right\}.
\]
Then $\H$ is generated by $h(\P^1)$, $h(E)$, $h(\P^2)$, and $h(S)$, where $E$ is any elliptic curve and $S$ is any surface with $h^{1,0}(S) - h^{0,1}(S) = \pm 1$. Moreover, $h(S)$ satisfies a monic quadratic relation over the free polynomial algebra $\Z[h(\P^1),h(E),h(\P^2)]$.
\end{thm}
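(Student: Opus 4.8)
The plan is to identify $\H$ as the ring obtained from $R := \Z[h(\P^1),h(E),h(\P^2)]$ by adjoining the single antisymmetric class carried by $S$. The crucial input is the characterization recalled above: the graded subring of $\H$ cut out by the extra relations $h^{i,j}=h^{j,i}$ is exactly $R$. Writing $\tau$ for the involution of $\Z[x,y,z]$ that swaps $x$ and $y$, this says precisely that $R = \H^\tau$, the ring of Hodge-\emph{symmetric} elements of $\H$, since $\tau$ preserves $\H$ (Serre duality is stable under $x \leftrightarrow y$). Everything then reduces to understanding $\H$ as a module over its symmetric subring $R$, and the arithmetic of the involution $\tau$.

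First I would dispose of the quadratic relation, which is formal. The elements $h(S)$ and $\tau h(S)$ are the two roots of $t^2 - \Sigma t + N$, where $\Sigma = h(S) + \tau h(S)$ and $N = h(S)\cdot \tau h(S)$. Both coefficients are $\tau$-invariant and lie in $\H$, hence in $\H^\tau = R$; this is the asserted monic quadratic relation over $R$, and it shows that $R[h(S)] = R + R\,h(S)$ as an $R$-module.

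The heart of the argument is to prove $\H = R + R\,h(S)$. Set $D = h(S) - \tau h(S) \in \H$. Using Serre duality on $S$ together with the hypothesis $h^{1,0}(S) - h^{0,1}(S) = \pm 1$, a direct computation gives $D = \pm (x-y)(1-xy)z^2$. I would then show that the antisymmetric part $\H^- := \{g \in \H : \tau g = -g\}$ is the free rank-one $R$-module generated by $D$. Any antisymmetric polynomial factors as $(x-y)Q$ with $Q$ symmetric, and Serre duality for $g$ translates into the condition $Q(x,y) = -(xy)^{n-1}Q(1/x,1/y)$; multiplication by $1-xy$ then carries the symmetric, Serre-dual polynomials of bidegree $\le n-2$ — that is, the degree-$(n-2)$ part $R_{n-2}$ of $R$, by the Kotschick--Schreieder characterization — isomorphically onto these $Q$'s. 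Over $\Q$ this is a rank count matching $\rk R_{n-2} = \rk\H^-_n$; integrally it is where the hypotheses do their work, because $1-xy$ is primitive (so Gauss's lemma upgrades divisibility over $\Q$ to divisibility over $\Z$) and the value $\pm 1$ guarantees that $D$ is a \emph{unit} multiple of $(x-y)(1-xy)z^2$, hence an honest $\Z$-module generator of $\H^-_n$ rather than a proper multiple.

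Granting $\H^- = R\,D$, the conclusion is immediate: for $f \in \H$ write $f - \tau f = r D$ with $r \in R$, and note $r D = r\,h(S) - \tau(r\,h(S))$ since $r \in R = \H^\tau$ is $\tau$-invariant; then $f - r\,h(S)$ is $\tau$-invariant and therefore lies in $R$, so $f \in R + R\,h(S)$. Combined with the quadratic relation this yields $\H = R[h(S)] = \Z[h(\P^1),h(E),h(\P^2),h(S)]$. I expect the only genuine obstacle to be the \emph{integral} statement $\H^- = R\,D$: the rational version is bookkeeping, but pinning down the $\Z$-structure — and seeing why $h^{1,0}(S)-h^{0,1}(S)=\pm1$, a unit rather than merely a nonzero integer, is the precise condition needed — is the technical crux, and is also what forces $h(S)$ itself, rather than its antisymmetrization $D$, to appear among the generators.
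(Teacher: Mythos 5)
Your argument is correct, but it takes a genuinely different route from the paper. The paper proves \autoref{Thm Hodge ring} by exhibiting an explicit presentation $\Z[A,B,C,D]/(G)\xrightarrow{\sim}\H_*$ (\autoref{Thm Hodge ring presentation}): it matches the graded ranks on both sides and then checks injectivity modulo every prime $\ell$ using the irreducibility of $G=D^2-ABD+C(A^2+B^2-4C)$ together with a Jacobian computation showing $A,B,C$ map to algebraically independent elements; the theorem is then read off in \autoref{Cor generated by P^1, E, P^2, and S} by expressing $A,B,C,D$ in terms of $h(\P^1),h(E),h(\P^2),h(S)$. You instead treat $\H_*$ as a quadratic extension of its $\tau$-invariants: you take the Kotschick--Schreieder identification of the Hodge-symmetric subring with $\Z[h(\P^1),h(E),h(\P^2)]$ as a black box, identify the antisymmetric part $\H^-$ as the free rank-one module generated by $(x-y)(1-xy)z^2$ (your divisibility argument is sound: antisymmetry forces vanishing on $x=y$, the twisted Serre-duality condition on $Q$ forces vanishing on $xy=1$, and Gauss's lemma keeps everything integral), and use $h^{1,0}(S)-h^{0,1}(S)=\pm1$ to see that $h(S)-\tau h(S)$ is a generator. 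Your device of writing $f-\tau f=r\,h(S)-\tau(r\,h(S))$ rather than averaging neatly avoids dividing by $2$, and the quadratic relation via $\Sigma,N\in\H^\tau$ is exactly right. The trade-offs: the paper's route is self-contained (it reproves the characteristic-$0$ statement rather than citing it) and produces the explicit relation $G$ and graded presentation that are reused for the de Rham and Hodge--de Rham rings in \autoref{Sec de Rham} and \autoref{Sec Hodge-de Rham}; your route is shorter and more structural, makes transparent why the hypothesis is a unit condition, and directly explains the "quadratic extension" heuristic from the introduction --- and if you add the one-line observation that $1$ and $h(S)$ are $R$-independent (because $h(S)-\tau h(S)\neq 0$ and $\Z[x,y,z]$ is a domain), you recover the full presentation $\H_*\cong R[t]/(t^2-\Sigma t+N)$ as well.
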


This is proven in \autoref{Cor generated by P^1, E, P^2, and S}. An example of a surface $S$ satisfying the hypothesis is given by Serre's counterexample to Hodge symmetry \cite[Prop.~16]{SerMex}, since $h^{0,1}(S) = 0$ and $h^{1,0}(S) = 1$. We recall the construction in \autoref{Prop example Hodge symmetry}.

There are ``twice as many'' possible Hodge polynomials in characteristic $p$ as in characteristic $0$: the Hodge diamonds now only have a $\Z/2$-symmetry instead of a $(\Z/2)^2$-symmetry. Thus, a Hilbert polynomial calculation suggests that the Hodge ring of varieties in characteristic $p > 0$ should be a quadratic extension of the one in characteristic $0$. This is indeed confirmed by \autoref{Thm Hodge ring}.

Similar to \cite[Thm.\ 2]{KS}, we also get a statement on birational invariants:

\begin{thm}\label{Thm Hodge birational}
A linear combination of the Hodge numbers $h^{i,j}(X)$ is a birational invariant of the variety $X$ (over a field $k$ of characteristic $p$) if and only if it is in the span of the $h^{0,j} = h^{n,n-j}$ and the $h^{i,0} = h^{n-i,n}$.
\end{thm}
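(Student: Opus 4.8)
The plan is to prove the two implications separately. The reverse implication (``if'') rests on known cohomological birational-invariance theorems, while the forward implication (``only if'') is a short consequence of \autoref{Thm Hodge} together with the blow-up formula for Hodge numbers. Throughout I may assume that $k$ is algebraically closed, since the Hodge numbers $h^{i,j}(X)$ are unchanged under the flat base change $\Spec\bar k\to\Spec k$ and birational equivalence is preserved; this also lets me realize auxiliary centers and quote results stated over perfect fields.

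For the ``if'' direction I would invoke two facts, both valid for smooth proper varieties in characteristic $p$. First, the spaces of global differential forms $H^0(X,\Omega_X^i)$ are birational invariants, so each $h^{i,0}$ is a birational invariant. Second, the coherent cohomology groups $H^j(X,\mathcal{O}_X)$ are birational invariants by the theorem of Chatzistamatiou--Rülling, so each $h^{0,j}$ is a birational invariant. Serre duality then shows that $h^{n-i,n}=h^{i,0}$ and $h^{n,n-j}=h^{0,j}$ are birational invariants as well, and hence so is every element of their span.

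For the ``only if'' direction, write $L=\sum_{i,j}c_{i,j}h^{i,j}$ and, using that $h^{i,j}=h^{n-i,n-j}$ holds identically, choose the Serre-symmetric representative with $c_{i,j}=c_{n-i,n-j}$. If $L$ is a birational invariant then in particular $L(\Bl{Z}X)=L(X)$ for every blow-up of a smooth proper $n$-fold $X$ along a smooth center $Z$. I would feed in the blow-up formula
\[
h^{i,j}(\Bl{Z}X)=h^{i,j}(X)+\sum_{l=1}^{c-1}h^{i-l,\,j-l}(Z),\qquad c=\operatorname{codim}(Z,X),
\]
valid in arbitrary characteristic. Taking $Z$ of codimension $c=2$, realized for instance as $Z\times\{\pt\}\subseteq Z\times\P^2$ so that $Z$ ranges over all smooth proper $(n-2)$-folds, the invariance collapses to
\[
\sum_{a,b}c_{a+1,\,b+1}\,h^{a,b}(Z)=0\qquad\text{for all smooth proper }Z\text{ with }\dim Z=n-2.
\]
This is a universal linear relation among the Hodge numbers of $(n-2)$-dimensional varieties, so \autoref{Thm Hodge} forces its coefficient vector $(c_{a+1,\,b+1})_{a,b}$ to be antisymmetric under the Serre involution $(a,b)\mapsto(n-2-a,n-2-b)$ of $Z$; that is, $c_{i,j}=-c_{n-i,n-j}$ for all interior indices $1\le i,j\le n-1$. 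Combined with the Serre symmetry $c_{i,j}=c_{n-i,n-j}$, this forces $c_{i,j}=0$ on the entire interior. Hence $L$ is supported on the boundary positions with $i\in\{0,n\}$ or $j\in\{0,n\}$, which by Serre duality is exactly the span of the $h^{i,0}=h^{n-i,n}$ and the $h^{0,j}=h^{n,n-j}$.

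The main obstacle is the ``if'' direction, and specifically the birational invariance of $h^{0,j}=\dim H^j(X,\mathcal{O}_X)$: unlike the invariance of global forms, this is genuinely deep in characteristic $p$, and I would quote Chatzistamatiou--Rülling rather than reprove it (in particular one cannot appeal to weak factorization, which is unavailable in positive characteristic). A secondary point to verify is that the blow-up formula for $H^j(X,\Omega_X^i)$ holds integrally in characteristic $p$; I expect this to be recorded earlier in the paper, as the same formula underlies the computation of $\H$. By contrast, the ``only if'' direction needs nothing beyond codimension-two blow-ups and \autoref{Thm Hodge}, and in particular does not require weak factorization.
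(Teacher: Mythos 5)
Your proof is correct, but your ``only if'' half takes a genuinely different route from the paper's. In \autoref{Sec birational} the paper works directly with the ideal $\I \subseteq \H_*$ generated by differences of birational classes: it shows $(C) \subseteq \I \subseteq \ker\big(\H_* \to \Z[x,y,z]/(xy)\big)$, where $C = xy\cdot z^2 = h(\P^1\times\P^1) - h(\P^2)$, and then uses the presentation of \autoref{Thm Hodge ring presentation} to compute $\H_*/(C) \cong \Z[A,B,D]/(D^2-ABD)$ and match ranks in each degree, concluding that all three subgroups coincide; since the quotient $\Z[x,y,z]/(xy)$ visibly remembers exactly the outer Hodge numbers, the theorem follows. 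You instead feed codimension-two blow-ups $\Bl{Z\times\{\pt\}}(Z\times\P^2)$ into the blow-up formula to extract a universal linear relation among the Hodge numbers of $(n-2)$-folds, and then quote \autoref{Thm linear relations Hodge} to make the interior coefficients anti-symmetric under the Serre involution, hence zero after symmetrisation. Both arguments are sound, and both use the same input for the ``if'' direction (classical invariance of $h^{i,0}$ plus Chatzistamatiou--R\"ulling for $h^{0,j}$). The trade-offs: your route is shorter once \autoref{Thm linear relations Hodge} is taken as a black box, but it requires the integral blow-up formula for Hodge cohomology in characteristic $p$ as an external input --- the paper never states this formula and deliberately sidesteps it by using the single birational pair $\P^2 \sim \P^1\times\P^1$ to produce $C$; the formula is true (e.g.\ by Gros's Gysin-map formalism), so this is a citation to supply rather than a gap. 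Conversely, the paper's computation buys the sharper structural statement $\I = (xy\cdot z^2)$ and yields the mod-$m$ congruence version of the theorem for free, whereas your symmetrisation $c_{i,j}\mapsto\tfrac12(c_{i,j}+c_{n-i,n-j})$ and the step ``symmetric and anti-symmetric implies zero'' are specific to rational coefficients.
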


This is proven in \autoref{Thm linear combinations birational invariant} below. The result in characteristic $0$ is the same \cite[Thm.\ 2]{KS}, except that there $h^{i,0}$ is actually equal to $h^{0,i}$ by Hodge symmetry. The fact that the $h^{0,j}$ are birational invariants in characteristic $p > 0$ is due to Chatzistamatiou and R\"ulling \makebox{\cite[Thm.\ 1]{ChaRul};} for $h^{i,0}$ this is classical.

Finally we address the failure of degeneration of the Hodge--de Rham spectral sequence:

\begin{thm}\label{Thm Hodge de Rham}
The only linear relations between the Hodge numbers $h^{i,j}(X)$ and the de Rham numbers $\hdR^i(X)$ of smooth proper varieties $X$ over $k$ are the relations spanned by
\begin{itemize}
\item Serre duality: $h^{i,j}(X) = h^{n-i,n-j}(X)$;
\item Poincar\'e duality: $\hdR^i(X) = \hdR^{2n-i}(X)$;
\item Components: $h^{0,0}(X) = \hdR^0(X)$;
\item Euler characteristic: $\sum_{i,j} (-1)^{i+j} h^{i,j}(X) = \sum_i (-1)^i \hdR^i(X)$.
\end{itemize}
\end{thm}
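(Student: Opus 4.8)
The plan is to run the ring-theoretic strategy of \autoref{Thm Hodge}, now recording the de Rham Betti numbers next to the Hodge numbers. To a smooth proper $X$ of dimension $n$ I attach the two polynomials $H_X(x,y)=\sum_{i,j}h^{i,j}(X)x^iy^j$ and $D_X(w)=\sum_i\hdR^i(X)w^i$, packaged as the pair $(H_X z^n,\,D_X z^n)$ in the graded ring $\Z[x,y,z]\times\Z[w,z]$. Künneth for the groups $H^j(X,\Omega^i_X)$ and for algebraic de Rham cohomology makes this assignment multiplicative under products, and it is additive under disjoint unions, so the invariants of all smooth proper varieties generate a graded subring. A universal linear relation among the invariants of varieties of a fixed dimension $n$ is exactly an element of the annihilator of the degree-$n$ linear span, so the theorem reduces to computing that span in each degree and identifying its annihilator with the span of the four listed relations.

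First I would verify that the four relations genuinely hold: Serre duality $h^{i,j}=h^{n-i,n-j}$ and Poincar\'e duality $\hdR^i=\hdR^{2n-i}$ are standard for smooth proper varieties; the Euler relation holds because the alternating sum of dimensions is unchanged along the Hodge--de Rham spectral sequence $E_1^{i,j}=H^j(X,\Omega^i_X)\Rightarrow\HdR^{i+j}(X)$; and $h^{0,0}=\hdR^0$ is the degeneration of this sequence in its bottom corner. I then encode the remaining freedom in the defect polynomial $\Delta_X(w)=H_X(w,w)-D_X(w)=\sum_m\delta_m w^m$, whose coefficients $\delta_m\ge 0$ record the failure of degeneration in total degree $m$. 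The four relations translate into $\delta_m=\delta_{2n-m}$, $\delta_0=\delta_{2n}=0$, and $\Delta_X(-1)=0$, so the space of allowed defects in dimension $n$ has dimension $n-1$. The key multiplicativity is that when $X$ degenerates, so that $D_X=H_X(w,w)=:P_X$ is its Poincar\'e polynomial, one has $\Delta_{X\times Y}=P_X\cdot\Delta_Y$.

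The hard direction realises this whole space. The degenerate varieties carry all of the Hodge information: the generators $\P^1$, $E$, $\P^2$, $S$ of \autoref{Thm Hodge ring} have degenerate Hodge--de Rham, so by \autoref{Thm Hodge ring} every Serre-symmetric $H$ occurs together with its forced de Rham polynomial $H(w,w)$, that is with $\Delta=0$. To create nonzero defects I would fix a single non-degenerate seed $V$: subtracting a degenerate variety with the same Hodge numbers isolates the pure-defect vector $(0,\Delta_V z^{\dim V})$, and multiplying by degenerate $X$ produces $(0,\Delta_V\cdot P_X\,z^n)$. Since $\Delta_V$ is palindromic with $\Delta_V(-1)=0$, each such product lands in the allowed defect space, and because $P_X\mapsto\Delta_V\cdot P_X$ is injective the spanning reduces to the purely algebraic claim that the Poincar\'e polynomials $P_X=H_X(w,w)$ of degenerate varieties span every space of palindromic polynomials --- which follows from \autoref{Thm Hodge ring} together with the samples $P_{\P^1}=1+w^2$ and $P_E=(1+w)^2$. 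The degenerate directions and these defect directions together span exactly the subring cut out by the four relations.

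The main obstacle is geometric rather than combinatorial: one must exhibit a non-degenerate seed $V$ in \emph{every} characteristic $p>0$, and in small dimension. Already $n=2$ forces a smooth proper surface whose Hodge--de Rham spectral sequence does not degenerate (with $\Delta_V=w(1+w)^2$), for otherwise $\hdR^1=h^{1,0}+h^{0,1}$ would be a relation for surfaces not implied by the four listed ones. Producing such a surface uniformly in $p$ is where the positive-characteristic geometry is essential; by contrast the remaining inputs --- that the quoted generators of the Hodge ring are themselves degenerate and that Poincar\'e polynomials span all palindromic polynomials --- are routine once the seed is available.
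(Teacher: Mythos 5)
Your proposal follows the same overall strategy as the paper's proof of this theorem (\autoref{Thm Hodge-de Rham ring presentation}): generate the Hodge part by varieties whose Hodge--de Rham spectral sequence degenerates (\autoref{Cor Hodge generated by varieties}), so that every pair $(a,s(a))$ is realised, then isolate the pure defect $(0,\dr(V)-s(h(V)))$ of a single non-degenerate seed and multiply by degenerate classes to fill the remaining directions. The one genuine difference is in how the defect space is filled. Over $\Q$ your divisibility argument is correct and is a real simplification: palindromicity together with $\Delta(-1)=0$ forces $(1+w)^2\mid\Delta$, and $\delta_0=0$ forces $w\mid\Delta$, so every allowed defect is $w(1+w)^2$ times a palindromic polynomial of two dimensions lower, and the single surface seed with $\Delta_V=w(1+w)^2$ suffices. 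Since \autoref{Thm Hodge de Rham} concerns $\Q$-linear relations, this is enough. Be aware, however, that your claim that the Poincar\'e polynomials of degenerate varieties ``span every space of palindromic polynomials'' is only true rationally: integrally $\DR_*$ carries the extra congruence $\hdR^n\equiv 0\pmod 2$ for $n$ odd, and the ideal $\ker(\chi,h^0)$ is \emph{not} generated by its degree-$2$ element (\autoref{Lem kernel of chi times h^0}). This is why the paper, which proves the finer statement that the subring $\HDR_*$ is generated by varieties (and hence the companion congruence theorem), needs a second seed: a threefold $T$ with $h^{2,0}+h^{1,1}+h^{0,2}-\hdR^2$ odd, obtained as a high-degree hypersurface in $S\times S$ (\autoref{Ex S and T}). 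Your argument does not yield that integral refinement.

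The one substantive input you leave unproved --- and correctly flag as the essential geometric content --- is the existence, for every $p$, of a smooth proper surface over a field of characteristic $p$ with non-degenerate Hodge--de Rham spectral sequence (equivalently, for surfaces, with $h^{1,0}+h^{0,1}-\hdR^1\neq 0$). This is not a routine fact; it occupies \autoref{Sec Hodge-de Rham degeneration} of the paper, where such a surface is produced over $\F_p$ as an $\balpha_p$-quotient of a (singular) complete intersection, with the precise values $h^{1,0}=h^{0,1}=\hdR^1=1$ (\autoref{Prop example Hodge-de Rham degeneration}). With that construction supplied, your argument is a complete and slightly more economical proof of the stated theorem.
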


This is proven in \autoref{Thm linear relations Hodge-de Rham}.

\subsection*{Outline of the paper}
In \autoref{Sec Hodge symmetry} we recall Serre's example of the failure of Hodge symmetry, and make slight improvements that we will use later. In \autoref{Sec Hodge-de Rham degeneration} we recall an example of W.\ Lang of a similar flavour of a surface for which the Hodge--de Rham spectral sequence does not degenerate.

In \autoref{Sec Hodge ring} we define the \emph{Hodge ring of varieties} as the graded subring of $\Z[x,y,z]$ where Serre duality holds, and show that it is generated by the Hodge polynomials of $\P^1$, any elliptic curve $E$, $\P^2$, and the surface $S$ of \autoref{Sec Hodge symmetry}. This proves \autoref{Thm Hodge}. In \autoref{Sec birational} we use this presentation to study birational invariants, proving \autoref{Thm Hodge birational}.

In \autoref{Sec de Rham} we define the \emph{de Rham ring of varieties} as the graded subring of $\Z[t,z]$ where Poincar\'e duality holds, and produce a similar presentation in terms of $\P^1$, $E$, $\P^2$, and $S$. Finally, in \autoref{Sec Hodge-de Rham} we combine the information of the previous sections into a \emph{Hodge--de Rham ring of varieties}, and we use the surface $S$ of \autoref{Sec Hodge-de Rham degeneration} to show that the Hodge--de Rham ring can be generated by varieties. This gives \autoref{Thm Hodge de Rham}. %In \autoref{Sec rigid} we end with some comments and variants.

\subsection*{Notation}
Throughout, $p$ will be a prime number, and $k$ will be a field of characteristic $p$ (fixed once and for all). A \emph{variety} over a field $k$ will be a geometrically integral, finite type, separated $k$-scheme. We write $\Var_k$ for the set of isomorphism classes of smooth proper $k$-varieties.

For a variety $X$ over a field $k$, we will write $H^{i,j}(X) = H^j(X,\Omega_X^i)$, and denote its dimension by $h^{i,j}(X) = h^j(X,\Omega_X^i)$ (we avoid the usual $h^{p,q}(X)$ as it leads to a clash of notation). Similarly, the algebraic de Rham cohomology is denoted $\HdR^i(X)$, and its dimension is $\hdR^i(X)$. We warn the reader that $h^{i,j}(X)$ and $h^{j,i}(X)$ may differ, and the sum $\sum_{i+j=m} h^{i,j}(X)$ may not equal $\hdR^m(X)$.

The Picard functor $\PIC_X = R^1\pi_* \mathbf G_m$ of a smooth proper variety $\pi \colon X \to \Spec k$ is representable by a scheme \cite[Exp.~XII,~Cor.~1.5(a)]{SGA6}. We denote its identity component by $\PIC_X^0$, and the union of the torsion components by $\PIC^\tau_X$. Note that neither is reduced in general.

\subsection*{Acknowledgements}
{\small
I am grateful to Stefan Schreieder for his encouragement to work on the positive characteristic analogues of \cite{KS}, as well as to Johan de Jong for his guidance in the early stages of this project. I benefited from conversations with Bhargav Bhatt, Raymond Cheng, Johan Commelin, H\'el\`ene Esnault, Raju Krishnamoorthy, Shizhang Li, Daniel Litt, Milan Lopuha\"a-Zwakenberg, Qixiao Ma, Xuanyu Pan, and Burt Totaro. I am particularly grateful to Xuanyu Pan for \autoref{Rmk not smooth}, and to Qixiao Ma for providing the idea for \autoref{Ex S and T}.
%I thank Brian Conrad, H\'el\`ene Esnault, and Shizhang Li for helpful discussions about rigid analytic varieties.
Part of this work was carried out with support of the Oswald Veblen Fund at the Institute for Advanced Study.
}

\numberwithin{equation}{section}

\section{Failure of Hodge symmetry}\label{Sec Hodge symmetry}
Following Serre \cite[Prop.\ 16]{SerMex}, we construct a smooth projective surface $X$ over $\F_p$ with $h^0(X,\Omega_X^1) = 0$ but $h^1(X,\mathcal O_X) = 1$. We do a little better than Serre's example: our $X$ is defined over the prime field $\F_p$, admits a lift to $\Z_p$, and we include the exact computation of $h^1(X,\mathcal O_X)$.

We start with a well-known lemma that is useful for the constructions in this section and the next.

\begin{Lemma}\label{Lem G-torsor sequence}
Let $X$ and $Y$ be proper $k$-varieties, let $G$ be a finite group scheme over $k$, and let $f \colon Y \to X$ be a $G$-torsor. Then there is a short exact sequence
\[
0 \to G^\vee \to \PIC_X \to (\PIC_Y)^G
\]
on the big flat site $(\Spec k)\fppf$, where $G^\vee$ is the Cartier dual of $G$.
\end{Lemma}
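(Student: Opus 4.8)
The plan is to realise both Picard sheaves as relative $\G_m$-cohomology and to descend along the torsor. Write $\pi_X\colon X\to\Spec k$ and $\pi_Y\colon Y\to\Spec k$, so that $\PIC_X=R^1\pi_{X,*}\G_m$ and $\PIC_Y=R^1\pi_{Y,*}\G_m$ as sheaves on $(\Spec k)\fppf$. As a $G$-torsor, $f$ is an fppf cover whose \v Cech nerve is the bar construction: the $(n+1)$-fold fibre product $Y\times_X\dots\times_X Y$ is canonically $Y\times G^n$, with cosimplicial maps built from the $G$-action and the group law. Computing $R\pi_{X,*}\G_m$ through this nerve produces the Hochschild--Serre (Cartan--Leray) spectral sequence of the fppf $G$-torsor,
\[
E_2^{p,q}=H^p\bigl(G,R^q\pi_{Y,*}\G_m\bigr)\Longrightarrow R^{p+q}\pi_{X,*}\G_m,
\]
where $H^p(G,-)$ is the cohomology of the finite group scheme $G$ (the bar complex, equivalently the cohomology of $BG$). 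I would extract the lemma from its five-term exact sequence.

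The content is the computation of three low-degree terms. Since $Y$ is proper and geometrically integral, $\pi_{Y,*}\mathcal O_Y=\mathcal O_{\Spec k}$ universally (cohomology and base change, with geometric connectedness and reducedness), so $R^0\pi_{Y,*}\G_m=\G_m$ with trivial $G$-action, and likewise for $X$. Hence $E_2^{0,1}=(\PIC_Y)^G$, while
\[
E_2^{1,0}=H^1(G,\G_m)=\HOM(G,\G_m)=G^\vee,
\]
the identification of $H^1$ of a group scheme with trivial coefficients $\G_m$ as the sheaf of characters, which is the definition of the Cartier dual. The five-term sequence
\[
0\to E_2^{1,0}\to R^1\pi_{X,*}\G_m\to E_2^{0,1}\xrightarrow{d_2}E_2^{2,0}
\]
then reads $0\to G^\vee\to\PIC_X\to(\PIC_Y)^G$, which is the claim; the second map is pullback along $f$, landing in $G$-invariants because $f$ is $G$-invariant.

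As a cross-check avoiding the machine, I would describe the same maps directly: $G^\vee\to\PIC_X$ sends a character $\chi$ to its isotypic component $L_\chi$ in $f_*\mathcal O_Y$, the line bundle associated to the torsor, and an element of $\ker(f^*\colon\PIC_X\to\PIC_Y)$ is a line bundle on $X$ trivialised over $Y$, i.e.\ an fppf descent datum on $\mathcal O_Y$, i.e.\ a $1$-cocycle valued in $\G_m(Y)=\G_m$. Because these coefficients are the constants, cocycles coincide with homomorphisms and all coboundaries vanish, recovering $\ker f^*=G^\vee$.

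I expect the main obstacle to be formal rather than computational: justifying the $E_2$-page in the relative fppf setting, namely that the cosimplicial sheaf $[p]\mapsto R\pi_{Y\times G^p,*}\G_m$ computes the group-scheme cohomology $H^p(G,R^\bullet\pi_{Y,*}\G_m)$. This is exactly where the finite-group-scheme setting matters: $G$ may be infinitesimal (such as $\mu_p$ or $\alpha_p$), so $f$ is fppf but not \'etale, and it is the fppf computation $H^1(G,\G_m)=G^\vee$ that yields the Cartier dual rather than a character group of geometric points. The only genuine geometric input is the rigidity $R^0\pi_{Y,*}\G_m=\G_m$ from properness and geometric integrality; this is what forces the kernel to be exactly $G^\vee$.
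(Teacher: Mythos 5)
Your proof is correct and is essentially the paper's argument: the paper runs the same spectral sequence with $E_2^{p,q}=H^p(G,R^q\pi_{Y,*}\G_m)$, merely packaging it as the Grothendieck--Leray spectral sequence for the composite $X\to BG\to\Spec k$ rather than via the \v Cech nerve of the torsor, and it identifies the $E_2^{1,0}$-term as $\PIC_{BG}=\HOM(G,\G_m)=G^\vee$ by the same cocycles-are-homomorphisms computation, with properness and geometric connectedness supplying $g_*\G_m=\G_m$ exactly as you use $R^0\pi_{Y,*}\G_m=\G_m$. Your elementary cross-check is also precisely the alternative proof the paper sketches in the remark following the lemma.
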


\begin{proof}
We have a commutative diagram
\begin{equation*}
\begin{tikzcd}
Y \ar{r}{\pi_Y}\ar{d}[swap]{f} & \Spec k\ar{d}[swap,yshift=.15em]{f^{\text{univ}}}\ar[start anchor=south east, end anchor=north west, equal]{rd} & \\
X \ar{r}[swap]{g} & BG \ar{r}[swap]{\pi_{BG}} & \Spec k
\end{tikzcd}
\end{equation*}
whose left hand square is a pullback, where $\pi_{\mathscr X} \colon \mathscr X \to \Spec k$ denotes the structure map of an algebraic stack $\mathscr X$ over $k$. The Grothendieck spectral sequence for
\begin{equation*}
\begin{tikzcd}
X\fppf \ar{r}{g_*} & BG\fppf \ar{r}{\pi_{BG,*}} & (\Spec k)\fppf
\end{tikzcd}
\end{equation*}
gives a short exact sequence of low degree terms
\begin{equation}
0 \to R^1\pi_{BG,*}(g_*\G_m) \to R^1\pi_{X,*}\G_m \to \pi_{BG,*}(R^1g_*\G_m).\label{Dia low degree terms}
\end{equation}
The middle term is $\PIC_X$, and the pullback of $R^1g_*\G_m$ along $f^{\text{univ}}$ is $\PIC_Y$. Since $\pi_{BG,*} \colon BG\fppf \to (\Spec k)\fppf$ is computed by pulling back along $f^{\text{univ}}$ and taking $G$-invariants, the third term of (\ref{Dia low degree terms}) is $(\PIC_Y)^G$.

Since $\pi_Y$ is proper with geometrically connected fibres, the same goes for $g$. This forces $g_* \G_m = \G_m$, so the first term of (\ref{Dia low degree terms}) is $R^1\pi_{BG,*}\G_m = \PIC_{BG}$. A line bundle on $BG$ is trivialised on $f^{\text{univ}} \colon \Spec k \to BG$, so it gives a cocycle on $\Spec k \times_{BG} \Spec k \cong G$. This is a function $\phi \colon G \to \G_m$, and the cocycle condition amounts to the condition that $\phi$ is a homomorphism. We finally conclude that $\PIC_{BG} \cong \HOM(G,\G_m) = G^\vee$.
\end{proof}
\vskip-\lastskip
There are more elementary proofs of \autoref{Lem G-torsor sequence} by trivialising a line bundle $\mathscr L$ on $X$ by pulling back to $Y$, and using $H^0(Y,\mathcal O_Y) = k$ to construct a morphism $G \to \G_m$. The advantage of our proof above is that it gives the obstruction to surjectivity of $\PIC_X \to (\PIC_Y)^G$: the next term in the sequence is $R^2\pi_{BG,*}\G_m$. Jensen showed \cite{Jen} that this group vanishes for most of the groups we're interested in, e.g.~$\Z/p$, $\balpha_p$, and $\bmu_p$, but we don't need this.

\begin{Cor}\label{Cor quotient of hypersurface}
Let $X$ be a proper $k$-variety, $G$ a finite group scheme over $k$, and $Y \to X$ a $G$-torsor. If $\PIC_Y^\tau = 0$, then $\PIC_X^\tau \cong G^\vee$.
\end{Cor}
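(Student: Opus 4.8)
The plan is to read the corollary directly off the exact sequence
\[
0 \to G^\vee \to \PIC_X \xrightarrow{f^*} \PIC_Y
\]
supplied by \autoref{Lem G-torsor sequence}, where the map $\PIC_X \to (\PIC_Y)^G \subseteq \PIC_Y$ is pullback along $f$. Since this identifies $G^\vee$ with $\ker f^*$, it suffices to sandwich $\PIC^\tau_X$ between $G^\vee$ and $\ker f^*$, i.e.\ to prove the two inclusions $G^\vee \subseteq \PIC^\tau_X$ and $\PIC^\tau_X \subseteq G^\vee$ separately.

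For the first inclusion, the only input I would use is that $G$ is finite, hence so is its Cartier dual $G^\vee$. A finite subgroup scheme of $\PIC_X$ has finite (thus torsion) image in the component group $\operatorname{NS}(X) = \PIC_X/\PIC^0_X$, so it lands in the preimage of the torsion subgroup $\operatorname{NS}(X)_{\mathrm{tors}}$, which is by definition $\PIC^\tau_X$. This gives $G^\vee \subseteq \PIC^\tau_X$ with essentially no work.

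For the reverse inclusion, I would exploit that $f^*$ is a homomorphism of group schemes locally of finite type over $k$. Such a homomorphism carries the identity component $\PIC^0_X$ into $\PIC^0_Y$ and induces a homomorphism on component groups $\operatorname{NS}(X) \to \operatorname{NS}(Y)$, which necessarily sends torsion to torsion. Hence $f^*$ maps $\PIC^\tau_X$ into $\PIC^\tau_Y$. But $\PIC^\tau_Y = 0$ by hypothesis, so $\PIC^\tau_X \subseteq \ker f^* = G^\vee$. Combining the two inclusions yields $\PIC^\tau_X = G^\vee$, and the identifying isomorphism is the one of \autoref{Lem G-torsor sequence}.

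The one point that genuinely needs care — the main obstacle, such as it is — is the compatibility of $f^*$ with the decomposition of the Picard schemes into identity component and torsion components: namely that a homomorphism of group schemes l.f.t.\ over a field preserves identity components and sends torsion classes in the (finitely generated) component group to torsion classes. Once this standard structural fact is invoked, the argument is immediate and uses no further geometric information about $X$ or $Y$ beyond the hypothesis $\PIC^\tau_Y = 0$.
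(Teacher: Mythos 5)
Your argument is correct and is essentially the paper's own proof, just written out in more detail: the paper likewise notes that $\PIC_X^\tau$ maps into $\PIC_Y^\tau = 0$ and then restricts the exact sequence of \autoref{Lem G-torsor sequence} to torsion components, which implicitly uses your first inclusion (that the finite group scheme $G^\vee$ lies in $\PIC_X^\tau$). No discrepancies to report.
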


\begin{proof}
The image of $\PIC_X^\tau$ always lands in $\PIC_Y^\tau$, which is $0$ by assumption. The restriction of the short exact sequence of \autoref{Lem G-torsor sequence} to the respective torsion components gives the result.
\end{proof}
\vskip-\lastskip
This allows us to construct a surface for which Hodge symmetry fails, following Serre \cite{SerMex}.

\begin{Prop}\label{Prop example Hodge symmetry}
There exists a smooth projective surface $X$ over $\F_p$ admitting a lift to $\Z_p$ such that $H^0(X,\Omega_X^1) = 0$ but $h^1(X,\mathcal O_X) = 1$.
\end{Prop}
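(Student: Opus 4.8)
The plan is to realize $X$ as a quotient of a well-chosen hypersurface by a free action of a finite group scheme $G$ of order $p$, and to use \autoref{Cor quotient of hypersurface} to read off the torsion in the Picard scheme. First I would take a smooth projective hypersurface $Y \subseteq \P^n$ (for suitable $n \geq 3$, so that $Y$ is a surface or higher-dimensional with the needed vanishing) defined over $\F_p$ and admitting a lift to $\Z_p$, chosen so that $\PIC_Y^\tau = 0$. For hypersurfaces of dimension $\geq 2$ this torsion-freeness is standard: by the Lefschetz-type results the Picard group is $\Z$ generated by the hyperplane class, and more to the point $H^1(Y,\mathcal O_Y) = 0$ and $H^0(Y,\Omega_Y^1) = 0$, which forces $\PIC_Y^0 = 0$ and hence (together with torsion-freeness of $\mathrm{NS}$) gives $\PIC_Y^\tau = 0$. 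I would then equip $Y$ with a \emph{free} action of $G = \bmu_p$ (or $\Z/p$, or $\balpha_p$) defined over $\F_p$; concretely one writes down an explicit equation invariant under a linear $\bmu_p$-action on $\P^n$ that is fixed-point-free on $Y$, so that $f\colon Y \to X := Y/G$ is a $G$-torsor.

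With this in hand, \autoref{Cor quotient of hypersurface} immediately yields $\PIC_X^\tau \cong G^\vee$. Taking $G = \bmu_p$ gives $G^\vee = \Z/p$, whereas taking $G = \balpha_p$ gives $G^\vee = \balpha_p$; the point of Serre's construction is that the \emph{Lie algebra} of $\PIC_X^\tau$ governs $H^1(X,\mathcal O_X)$ and its dual governs the Frobenius-action structure, so the asymmetry between $h^{0,1}$ and $h^{1,0}$ comes precisely from the non-self-duality of these infinitesimal group schemes. I would use the identifications $H^1(X,\mathcal O_X) = \Lie(\PIC_X)$ and the fact that $H^0(X,\Omega_X^1)$ is the space of global $1$-forms, which for such a quotient is computed by the $G$-invariant forms on $Y$ that descend. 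Choosing $G = \balpha_p$ (self-dual, but with nontrivial Lie algebra) so that $\PIC_X^\tau = \balpha_p$ is nonreduced gives $\dim \Lie(\PIC_X) = 1$, hence $h^1(X,\mathcal O_X) = 1$, while the étale-local triviality forces every global regular $1$-form on $X$ to pull back to an invariant form on $Y$ that must vanish, giving $H^0(X,\Omega_X^1) = 0$.

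The lift to $\Z_p$ comes for free from lifting the explicit equations: the hypersurface $Y$ and the linear $G$-action are both defined by equations with $\Z_p$-coefficients (indeed $\F_p$-coefficients lifted trivially), and smoothness and freeness of the action are open conditions that persist after lifting, so $X = Y/G$ lifts as well. The exact computation $h^1(X,\mathcal O_X) = 1$ is the part I expect to require the most care: the bound $h^1(X,\mathcal O_X) \geq 1$ follows from $\PIC_X^\tau \neq 0$ being nonreduced, but the matching upper bound $\leq 1$ needs either a direct cohomological calculation (computing $H^1(X,\mathcal O_X)$ via the $G$-invariants of $H^*(Y,\mathcal O_Y)$ together with the low-degree terms of the Hochschild–Serre / Cartan–Leray spectral sequence for the torsor $f$) or a dimension count on $\Lie \PIC_X^\tau$. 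The main obstacle is thus pinning down this cohomology precisely rather than just up to an inequality, and ensuring the chosen group scheme $G$ is genuinely $\balpha_p$-like (nonreduced with one-dimensional Lie algebra) so that the surface exhibits $h^{0,1} = 1$ while $h^{1,0} = 0$, matching the hypothesis needed for the surface $S$ in \autoref{Thm Hodge ring}.
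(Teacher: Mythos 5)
The overall architecture of your proposal (a free action of an order-$p$ group scheme $G$ on a complete-intersection-type variety $Y$ with $\PIC_Y^\tau = 0$, followed by \autoref{Cor quotient of hypersurface}) matches the paper, but your final choice of $G$ is wrong, and that choice is the crux of the construction. You settle on $G = \balpha_p$. Since $\balpha_p$ is infinitesimal, the torsor $Y \to X$ is then purely inseparable, not \'etale, so there is no ``\'etale-local triviality'' and the pullback $H^0(X,\Omega_X^1) \to H^0(Y,\Omega_Y^1)$ is \emph{not} injective: its kernel is generated by the nonzero form $\omega$ of \autoref{Rmk alpha_p torsor}, and the computation in \autoref{Prop example Hodge-de Rham degeneration} shows that such an $\balpha_p$-quotient has $h^{1,0}(X) = h^{0,1}(X) = 1$. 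So Hodge symmetry does not fail numerically for that choice (what fails is Hodge--de Rham degeneration), and the desired vanishing $H^0(X,\Omega_X^1)=0$ is simply false. Moreover $\balpha_p$ does not exist over $\Z_p$, so your ``lift the equations'' argument collapses for it. The choice $G = \bmu_p$ is also wrong: then $G^\vee = \Z/p$ is \'etale and $h^1(X,\mathcal O_X) = \dim\Lie(\Z/p) = 0$. The choice that works, and the one the paper makes, is the constant \'etale group $G = \Z/p$: then $G^\vee = \bmu_p$, so $\PIC_X^\tau \cong \bmu_p$ and $h^1(X,\mathcal O_X) = \dim\Lie(\bmu_p) = 1$ \emph{exactly} --- the upper bound you worry about needs no spectral sequence, because \autoref{Cor quotient of hypothesis} gives an isomorphism rather than an inclusion --- while the torsor is genuinely \'etale, so $H^0(X,\Omega_X^1) \hookrightarrow H^0(Y,\Omega_Y^1) = 0$.

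A secondary gap is your plan to realise $Y$ as a smooth hypersurface carrying a free linear $G$-action. For $X$ to be a surface, $Y$ must be a surface in $\P^3$; a linear $\Z/p$-action on $\P^3$ in characteristic $p$ is unipotent, and for $p \leq 3$ its fixed locus contains a line, which every surface in $\P^3$ meets, so no invariant surface avoids it. For infinitesimal $G$ the situation is worse: a smooth complete intersection of large degree admits no nontrivial infinitesimal automorphisms at all (\autoref{Rmk not smooth}). The paper sidesteps this by letting $G$ act on $\P^N$ for $N \gg 0$ with fixed locus of codimension at least $3$, forming the quotient $\P^N/G$, and cutting out the surface $X$ inside its smooth locus by repeated application of Poonen's Bertini theorem; the cover $Y$ is then the invariant complete intersection lying over $X$, which in the $\Z/p$ case is automatically smooth because $X$ is and the cover is \'etale.

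\begingroup
\footnotesize
(One typo above: ``\autoref{Cor quotient of hypothesis}'' should read \autoref{Cor quotient of hypersurface}.)
\endgroup
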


\begin{proof}
Let $G \to \Spec \Z_p$ be the constant \'etale group scheme $\Z/p$. Choose a linear action of $G$ on $\P^N_{\Z_p}$ for some $N \gg 0$ such that the fixed point locus of the special fibre has codimension at least $3$. For example, we can take $3$ copies of the regular representation, and then projectivise. Form the quotient $\mathcal Z = \P^N/G$, which is smooth away from the image of the fixed locus.

By the codimension $\geq 3$ assumption, repeatedly applying Poonen's Bertini theorem \cite{Poo} produces a projective surface $\mathcal X \subseteq \mathcal Z^{\text{sm}}$ over $\Z_p$ whose special fibre is smooth, hence $\mathcal X$ is smooth over $\Z_p$. Its inverse image $\mathcal Y$ in $\P^N_{\Z_p}$ is a complete intersection, and $\mathcal Y \to \mathcal X$ is a $G$-torsor.

The special fibre $Y$ of $\mathcal Y$ is smooth since $X$ and $G$ are, so $H^0(Y,\Omega_Y^1) = 0$, $H^1(Y,\mathcal O_Y) = 0$ and $\PIC_Y^\tau = 0$ \cite[Exp.\ XI,\ Thm.\ 1.8]{SGA7II}. Then \autoref{Cor quotient of hypersurface} gives $\PIC_X^\tau = (\Z/p)^\vee = \bmu_p$, so
\[
H^1(X,\mathcal O_X) = \Lie(\PIC_X^\tau) = \Lie(\bmu_p)
\]
has dimension $1$. On the other hand, $Y \to X$ is \'etale since it is a $(\Z/p)$-torsor, so $H^0(X,\Omega_X^1) \to H^0(Y,\Omega_Y^1)$ is injective, forcing $H^0(X,\Omega_X^1) = 0$.
\end{proof}

\begin{Rmk}\label{Rmk Hodge-de Rham lift}
Because the surface $X$ constructed in \autoref{Prop example Hodge symmetry} lifts to $\Z_p$, the Hodge--de Rham spectral sequence degenerates \cite{DelIll}. For $H^0(X,\Omega_X^1)$ and $H^1(X,\mathcal O_X)$, this can also be checked by hand using cocycles (analogous to the proof of \autoref{Prop example Hodge-de Rham degeneration} below).
\end{Rmk}

\section{Failure of Hodge--de Rham degeneration}\label{Sec Hodge-de Rham degeneration}
Inspired by a construction of W.\ Lang \cite{Lang}, we do a precise computation of $H^0(X,\Omega_X^1)$, $H^1(X,\mathcal O_X)$, and $\HdR^1(X)$ when $X$ is an $\balpha_p$-quotient of a (singular) complete intersection. Lang moreover shows that $X$ lifts to a degree $2$ ramified extension of $\Z_p$, but this plays no role for us. 

A lot is written about structure theory and singularities of $\balpha_p$-torsors and more generally purely inseparable quotients \cite{Eke, KimNiit, AraAvra,DemGab,Tzio}, but we only need elementary results.

\begin{Rmk}\label{Rmk alpha_p torsor}
If $A$ is a ring of characteristic $p$, then an $\balpha_p$-torsor is given by $A \to A[z]/(z^p - f)$ for some $f \in A$, and it is the trivial torsor if and only if $f$ is a $p$-th power. Indeed, this follows from the exact sequence
\[
A \stackrel {(-)^p}\rA A \to H^1(\Spec A, \balpha_p) \to 0.
\]
For a general $\balpha_p$-torsor $\pi \colon Y \to X$, there are affine opens $\Spec A_i = U_i \subseteq X$ and sections $z_i \in \mathcal O_Y(\pi^{-1}(U_i))$ and $f_i \in \mathcal O_X(U_i)$ such that $\mathcal O_X(U_i) \to \mathcal O_Y(\pi^{-1}(U_i))$ is given by $A_i \to A_i[z_i]/(z_i^p - f_i)$. We have $f_i - f_j = g_{ij}^p$ for some $g_{ij} \in \mathcal O_X(U_{ij})$, hence the $1$-forms $\omega_i = df_i$ glue to a global $1$-form $\omega$.
\end{Rmk}

\begin{Lemma}\label{Lem torsor}
Let $X$ be a normal integral scheme of characteristic $p$, and let $\pi \colon Y \to X$ be an $\balpha_p$-torsor, with $f_i$, $z_i$, $g_{ij}$, $\omega_i$, and $\omega$ as in \autoref{Rmk alpha_p torsor}. Then the following are equivalent:
\begin{enumerate}
\item $\pi$ is not the trivial torsor;\label{Item nontrivial torsor}
\item no $f_i$ is a $p$-th power;\label{Item not a p-th power}
\item $Y$ is integral.\label{Item integral}
\item $\omega$ is not zero in $H^0(X,\Omega_{X/\F_p}^1)$ (or even in $\Omega_{K(X)/\F_p}^1$);\label{Item df nonzero}
\end{enumerate}
\end{Lemma}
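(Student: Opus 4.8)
The plan is to treat condition~(2) as a hub and establish $(2)\Leftrightarrow(1)$, $(2)\Leftrightarrow(3)$, and $(2)\Leftrightarrow(4)$ separately. Two structural observations underlie everything. First, since $X$ is integral we may work inside the single function field $K = K(X)$, and since $X$ is \emph{normal} each $A_i$ is integrally closed in $K$; hence for $f \in A_i$ one has $f \in A_i^p$ if and only if $f \in K^p$, because any root of $T^p - f$ in $K$ is integral over $A_i$ and so lies in $A_i$. Second, the relation $f_i - f_j = g_{ij}^p$ from \autoref{Rmk alpha_p torsor} gives $f_j = f_i - g_{ij}^p$ in characteristic $p$, so $f_i \in K^p$ forces every $f_j \in K^p$; thus either \emph{no} $f_i$ is a $p$-th power or \emph{all} are, and condition~(2) reduces to a statement about a single $f_i$, tested in $K$.

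For $(1)\Leftrightarrow(2)$, I would use that a torsor is trivial exactly when it admits a global section, and that a section over $U_i$ is a choice of $s_i \in A_i$ with $s_i^p = f_i$. A global section thus restricts to $p$-th roots of each $f_i$, giving $(1)\Rightarrow\neg(2)$ is impossible, i.e. $(2)\Rightarrow(1)$. Conversely, if some $f_i \in A_i^p$ then every $f_j$ has a unique root $s_j = f_j^{1/p}\in A_j$ (normality), and uniqueness of $p$-th roots in the domain $K$ combined with $(s_i - s_j)^p = f_i - f_j = g_{ij}^p$ forces $s_i - s_j = g_{ij}$; this is precisely the compatibility matching the gluing $z_i = z_j + g_{ij}$, so the $s_i$ patch to a global section and the torsor is trivial. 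Hence $(1)\Leftrightarrow(2)$.

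The equivalence $(2)\Leftrightarrow(3)$ is local on $X$. Over $U_i$ the torsor is $\Spec R_i$ with $R_i = A_i[z_i]/(z_i^p - f_i)$, which is $A_i$-free and so embeds into $K[z_i]/(z_i^p - f_i)$. If $f_i \notin K^p$ then $z_i^p - f_i$ is irreducible over $K$, the latter ring is a field, and $R_i$ is a domain; if $f_i = h^p$ then $z_i^p - f_i = (z_i - h)^p$ is non-reduced. By normality $f_i \notin A_i^p \Leftrightarrow f_i \notin K^p$, so each $\pi^{-1}(U_i)$ is integral exactly under~(2). To globalize I would note that reducedness of $Y$ is local, hence equivalent to reducedness of all $R_i$, and that $Y$ is irreducible because it is covered by the irreducible opens $\pi^{-1}(U_i)$, which pairwise meet since $\pi$ is faithfully flat and the $U_{ij}$ are nonempty; thus $Y$ is integral precisely when~(2) holds.

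Finally, for $(2)\Leftrightarrow(4)$ the easy direction is $\neg(2)\Rightarrow\neg(4)$: if $f_i = h^p$ with $h\in A_i$ then $\omega = df_i = d(h^p) = 0$. The reverse direction $(2)\Rightarrow(4)$ carries the real content and is the main obstacle: I must show that if no $f_i$ is a $p$-th power then $\omega = df_i \neq 0$ in $\Omega^1_{K/\F_p}$. This rests on the standard field-theoretic fact that $\ker\bigl(d\colon K \to \Omega^1_{K/\F_p}\bigr) = K^p$ for any field $K$ of characteristic $p$ (proved by choosing a $p$-basis, for which the $dx_\lambda$ form a $K$-basis of $\Omega^1_{K/\F_p}$). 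Granting it, $df_i = 0$ would give $f_i \in K^p$, hence $f_i \in A_i^p$ by normality, contradicting~(2). The two formulations of~(4) then agree: nonvanishing in $\Omega^1_{K/\F_p}$ implies nonvanishing in $H^0(X,\Omega^1_{X/\F_p})$ since the former is the germ of the latter at the generic point, while $\omega = 0$ in $\Omega^1_{K/\F_p}$ yields $f_i \in A_i^p$ and hence $\omega = 0$ already in $H^0$. Normality is exactly what blocks the pathology in each step (without it $R_i$ could be non-reduced with $f_i \notin A_i^p$), so once the two structural observations are in place the remaining work is the single nontrivial input $\ker d = K^p$.
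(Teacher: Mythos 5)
Your proof is correct and follows essentially the same route as the paper: condition (2) as the hub, normality of $X$ to identify $p$-th powers in $A_i$ with $p$-th powers in $K(X)$, irreducibility of $z^p-f$ over $K$ for the equivalence with integrality, and the fact that $\ker\bigl(d\colon K\to\Omega^1_{K/\F_p}\bigr)=K^p$ (which the paper simply cites to the Stacks project) for the equivalence with $\omega\neq 0$. You supply somewhat more detail than the paper on the gluing of local sections and on globalizing integrality from the charts $\pi^{-1}(U_i)$, but the underlying argument is the same.
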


\begin{proof}
If one $f_i$ is a $p$-th power, then they all are, since they differ by $p$-th powers. Hence, $\pi$ is trivial in this case. This proves $\ref{Item nontrivial torsor} \Ra \ref{Item not a p-th power}$; the converse is trivial. Since $X$ is normal, $f_i$ is a $p$-th power in $A_i$ if and only if it is so in $K = K(X)$. Thus, if no $f_i$ is a $p$-th power, then all the rings $K[z_i]/(z_i^p-f_i)$ are fields, so $A_i[z_i]/(z_i^p-f_i)$ is a domain. This proves $\ref{Item not a p-th power} \Ra \ref{Item integral}$, and again the converse is trivial. Finally, $\ref{Item not a p-th power} \LRa \ref{Item df nonzero}$ is \cite[Tag \href{https://stacks.math.columbia.edu/tag/07P2}{07P2}]{Stacks}.
\end{proof}

\begin{Lemma}\label{Lem torsor smooth}
Let $k$ be a field of characteristic $p$, let $X$ and $Y$ be finite type $k$-schemes, and let $\pi \colon Y \to X$ be a morphism of $k$-schemes that is an $\balpha_p$-torsor. If $Y$ is smooth, then so is $X$.
\end{Lemma}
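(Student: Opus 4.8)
The plan is to reduce the statement to the descent of regularity along the faithfully flat morphism $\pi$. Since $\pi$ is a torsor under $\balpha_p$, which is finite locally free of rank $p$, the morphism $\pi$ is itself finite locally free of rank $p$; in particular it is faithfully flat and surjective.

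Because $X$ and $Y$ are of finite type over $k$, smoothness over $k$ is equivalent to geometric regularity, so it suffices to prove that $X_{\bar k}$ is regular. The formation of $\balpha_p$-torsors commutes with base change, so $\pi_{\bar k} \colon Y_{\bar k} \to X_{\bar k}$ is again an $\balpha_p$-torsor, and $Y_{\bar k}$ is regular since $Y$ is smooth over $k$. Replacing everything by its base change to $\bar k$, I am reduced to showing: if $\pi \colon Y \to X$ is a surjective flat morphism of finite type $k$-schemes with $Y$ regular, then $X$ is regular.

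For this, I would argue pointwise. Given $x \in X$, surjectivity of $\pi$ provides a point $y \in \pi^{-1}(x)$, and flatness makes $\mathcal O_{X,x} \to \mathcal O_{Y,y}$ a flat local homomorphism of Noetherian local rings whose target is regular (as $Y$ is regular). The standard descent of regularity along flat local homomorphisms — if $A \to B$ is such a map with $B$ regular, then $A$ is regular — then shows $\mathcal O_{X,x}$ is regular. As $x$ was arbitrary, $X$ is regular, and hence smooth over $k$.

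The only genuine subtlety is the possible imperfection of $k$: one cannot identify regularity with smoothness directly, which is exactly why I pass to $\bar k$ and phrase the reduction in terms of geometric regularity. Everything else is formal once one knows that $\balpha_p$-torsors are finite locally free and that regularity descends along flat local homomorphisms; the latter is the one external input, but it is entirely standard.
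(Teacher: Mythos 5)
Your argument is correct, but it takes a genuinely different route from the paper's. The paper exploits the height-one structure of $\balpha_p$: the absolute Frobenius of $Y$ factors through $\pi$ (locally, $A[z]/(z^p-f) \to A$, $a \mapsto a^p$, $z \mapsto f$, splits $\Fr$ across the torsor), giving a map $X \to Y$ with $\pi \circ \Fr_Y = \Fr_X \circ \pi$; since $\Fr_Y$ is flat by Kunz's theorem and $\pi$ is faithfully flat, $\Fr_X$ is flat, and Kunz's theorem again yields regularity of $X$. You instead use only that $\pi$ is finite locally free (of rank $p$) and surjective, and invoke descent of regularity along flat local homomorphisms of Noetherian local rings (Matsumura, Thm.~23.7(i), or EGA~IV, 6.5.2). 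Both proofs handle the imperfection of $k$ the same way, by passing to $\bar k$ and trading smoothness for regularity. Your route is more elementary---no Kunz, no characteristic-$p$ input beyond that reduction---and it actually proves the stronger statement that geometric regularity descends along \emph{any} surjective flat morphism of finite type $k$-schemes, so the $\balpha_p$-torsor hypothesis is only used to know that $\pi$ is faithfully flat. What the paper's argument buys in exchange is the explicit Frobenius factorization, which is specific to torsors under infinitesimal group schemes and of some independent interest; as a proof of the lemma itself, yours is complete and, if anything, cleaner.
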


\begin{proof}
We may replace $k$ by $\bar k$ and `smooth' by `regular'. Since $Y$ is regular, Kunz's theorem \cite[Thm.\ 2.1]{Kunz} implies that $\Fr_Y \colon Y \to Y$ is flat. We have a factorisation
\vspace{-.75em}
\begin{equation*}
\begin{tikzcd}
Y \ar{r}{\pi}\ar[bend left]{rr}{Fr_Y} & X \ar{r}\ar[bend right]{rr}[swap]{\Fr_X} & Y \ar{r}{\pi} & X\punct{,}
\end{tikzcd}
\end{equation*}
where $\pi$ and $\Fr_Y$ are flat. We conclude that $\Fr_X \circ \pi = \pi \circ \Fr_Y$ is flat, hence $\Fr_X$ is flat since $\pi$ is faithfully flat. Then Kunz's theorem \cite[Thm.\ 2.1]{Kunz} implies that $X$ is regular.
\end{proof}

\begin{Prop}\label{Prop example Hodge-de Rham degeneration}
There exists a smooth projective surface $X$ over $\F_p$ for which $H^0(X,\Omega_X^1)$, $H^1(X,\mathcal O_X)$, and $\HdR^1(X)$ are all $1$-dimensional. In particular, the Hodge--de Rham spectral sequence of $X$ does not degenerate.
\end{Prop}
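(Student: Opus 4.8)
The plan is to mimic the construction of \autoref{Prop example Hodge symmetry}, replacing the \'etale group scheme $\Z/p$ by the infinitesimal group scheme $\balpha_p$. I would choose a linear $\balpha_p$-action on $\P^N_{\F_p}$ (coming from a $p$-closed vector field $D$ of additive type) whose fixed locus $\{D=0\}$ has codimension $\geq 3$, form $\mathcal Z = \P^N/\balpha_p$, and cut out a smooth projective surface $X \subseteq \mathcal Z^{\mathrm{sm}}$ by repeatedly applying Poonen's Bertini theorem \cite{Poo}. Let $Y \subseteq \P^N$ be the preimage, so that $\pi \colon Y \to X$ is a nontrivial $\balpha_p$-torsor with $Y$ a complete intersection; by \autoref{Lem torsor} it is integral. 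Unlike in \autoref{Prop example Hodge symmetry}, the surface $Y$ is now necessarily \emph{singular}: a free $\balpha_p$-action supplies a nowhere-vanishing vector field, forcing $c_2(Y)=0$, which is impossible for a smooth complete intersection surface. Hence \autoref{Lem torsor smooth} does not apply, but $X$ is smooth by construction. I write $\omega = df_i$ for the global $1$-form and $\xi = [\{g_{ij}\}] \in H^1(X,\mathcal O_X)$ for the class of the torsor, where $f_i - f_j = g_{ij}^p$ as in \autoref{Rmk alpha_p torsor}.

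For $h^1(X,\mathcal O_X)$ I would push the computation through the derivation $D$ generating the action, which locally is $\partial/\partial z$ on $\mathcal O_X[z]/(z^p-f)$, so that $\ker D = \mathcal O_X$ and $1 = D(z_i)\in\im D$. This gives a four-term exact sequence
\[
0 \rA \mathcal O_X \rA \pi_*\mathcal O_Y \stackrel{D}{\rA} \pi_*\mathcal O_Y \rA \mathcal L \rA 0,
\]
which I split as $0 \to \mathcal O_X \to \pi_*\mathcal O_Y \to \im D \to 0$ and $0 \to \im D \to \pi_*\mathcal O_Y \to \mathcal L \to 0$. Since $Y$ is a complete intersection surface, $H^0(Y,\mathcal O_Y)=k$ and $H^1(Y,\mathcal O_Y)=0$ by Koszul, so the first sequence gives $H^0(X,\im D)\cong H^1(X,\mathcal O_X)$, while in the second the global section $1$ lies in $\im D$ and spans $H^0(\pi_*\mathcal O_Y)$, forcing $H^0(X,\im D)=k$. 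Thus $h^1(X,\mathcal O_X)=1$, and one checks the connecting map sends $1\mapsto\xi$, so $\xi$ generates. For $h^0(X,\Omega^1_X)$, the form $\omega$ is nonzero by \autoref{Lem torsor}, giving $\geq 1$; for the upper bound I would analyse $\pi^*$ together with the $\balpha_p$-action, using that $\pi^*$ is injective on global sections of the bundle $\Omega^1_X$ and that $\ker(\pi^*\Omega^1_X\to\Omega^1_Y)=\mathcal O_Y\cdot\pi^*\omega$ (as $\pi^*\omega = d(z_i^p)=0$), pinning every global $1$-form to a scalar multiple of $\omega$.

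The decisive point is $\hdR^1(X)=1$. First, $\omega$ gives a nonzero de Rham class: $[\omega]_{\dr}=0$ would mean $\omega = dh$ for a global $h$, which unwinds (via $h-f_i\in\mathcal O_X^p$ and $f_i-f_j=g_{ij}^p$) to $\{g_{ij}\}$ being a coboundary, i.e.\ $\xi=0$, contradicting the previous paragraph; so $\hdR^1(X)\geq 1$ with $[\omega]_{\dr}$ spanning $F^1\HdR^1(X)$. Since $\hdR^1(X)\leq h^0(\Omega^1_X)+h^1(\mathcal O_X)=2$ always, it remains to show the spectral sequence drops exactly one dimension, i.e.\ that $\xi$ does \emph{not} lift to $\HdR^1(X)$; equivalently, $d_1\xi=[\{dg_{ij}\}]\in H^1(X,\Omega^1_X)$ is nonzero. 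Under the identification of $d_1$ with the derivative $\Lie(c_1)$ of the first Chern class $c_1\colon\PIC_X\to H^1(X,\Omega^1_X)$ along the infinitesimal direction $\xi$, the relevant contrast with \autoref{Prop example Hodge symmetry} is that $\Fr$ annihilates $H^1(X,\mathcal O_X)$ here — indeed $\Fr(\xi)=[\{g_{ij}^p\}]=[\{f_i-f_j\}]=0$ — whereas in the $\bmu_p$-case the torsor lifts to characteristic $0$ and $c_1$ vanishes on $\PIC^\tau_X$.

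I expect this last step — proving $[\{dg_{ij}\}]\neq 0$ in $H^1(X,\Omega^1_X)$ — to be the main obstacle, since it is genuinely sensitive to the chosen torsor rather than formal. The cleanest route I see is a direct \v Cech computation following W.\ Lang \cite{Lang}: a lift of $\xi$ would provide closed $1$-forms $\beta_i$ with $\beta_j-\beta_i = dg_{ij}$, and since the Cartier operator $C$ annihilates the exact forms $dg_{ij}$, the $C(\beta_i)$ would glue to a global form $c\,\omega$; tracing this back through $f_i-f_j=g_{ij}^p$ should contradict $\xi\neq 0$. Granting this, $\xi$ dies and $\hdR^1(X)=1$; in particular $\hdR^1(X)=1\neq 2 = h^0(X,\Omega^1_X)+h^1(X,\mathcal O_X)$, so the Hodge--de Rham spectral sequence of $X$ does not degenerate.
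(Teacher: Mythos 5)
Your construction and your computations of $h^{1,0}(X)$ and $h^{0,1}(X)$ are essentially sound, but the decisive step --- that the class $\xi = [(g_{ij})]$ spanning $H^1(X,\mathcal O_X)$ does \emph{not} survive to $\HdR^1(X)$ --- is exactly the step you leave open (``I expect this last step to be the main obstacle'', ``granting this''), and it is the heart of the proposition: without it you only get $1 \leq \hdR^1(X) \leq 2$. Your Cartier-operator sketch is not yet a proof. From a hypothetical \v{C}ech--de Rham lift $(\beta_i, g_{ij})$ of $\xi$ you do get closed forms with $\beta_j - \beta_i = dg_{ij}$, hence a global form $C(\beta_i) = c\,\omega$; but you must then rule out both $c = 0$ (where $\beta_i$ is locally exact, so $\xi \in \im \Fr$ on $H^1(X,\mathcal O_X)$, which contradicts $\Fr\xi = [(f_i - f_j)] = 0$ together with $\dim H^1(X,\mathcal O_X) = 1$) and $c \neq 0$ (which requires a further computation with $f_i^{p-1}df_i$); none of this is carried out. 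The paper's argument is shorter and avoids the Cartier operator entirely: if $(dg_{ij})$ were a coboundary, say $dg_{ij} = \eta_i - \eta_j$, then the forms $dz_i - \eta_i$ on $\pi^{-1}(U_i)$ glue --- because $z_i - z_j = g_{ij}$, which follows from $(z_i-z_j)^p = g_{ij}^p$ and the integrality of $Y$ --- to a nonzero global $1$-form on $Y$ (nonzero since $dz_i$ is not pulled back from $U_i$), contradicting $H^0(Y,\Omega_Y^1) = 0$. This shows $d_1\xi \neq 0$ directly, with no case analysis; you should either adopt it or complete your own route.

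Two smaller points. First, you assert without justification that the restricted torsor $Y \to X$ is nontrivial; the paper secures this by choosing $X$ in Poonen's Bertini theorem with a tangency condition guaranteeing $\omega|_X \neq 0$ (equivalent to nontriviality by \autoref{Lem torsor}), and this choice also underlies your claim that $\omega$ spans a nonzero line in $H^0(X,\Omega_X^1)$. Second, your computation of $h^1(X,\mathcal O_X)$ via the four-term sequence $0 \to \mathcal O_X \to \pi_*\mathcal O_Y \xrightarrow{D} \pi_*\mathcal O_Y \to \mathcal L \to 0$ is a genuine and correct alternative to the paper's route (which uses \autoref{Lem G-torsor sequence} to get $\PIC^0_X \cong \balpha_p^\vee = \balpha_p$ and then takes Lie algebras): your version is more elementary and identifies the generator $\xi$ directly, while the paper's is uniform in the group scheme $G$ and is what makes \autoref{Prop example Hodge symmetry} and \autoref{Prop example Hodge-de Rham degeneration} run in parallel.
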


\begin{proof}
Let $G \to \Spec \F_p$ be the group scheme $\balpha_p$, and choose a linear action of $G$ on $\P^N_{\F_p}$ for some $N \gg 0$ such that the fixed point locus has codimension at least $3$. For example, we take 3 copies of the regular representation, and then projectivise \cite[Lem.\ 4.2.2]{Ray}. Form the quotient $Z = \P^N/G$, which is an $\balpha_p$-torsor away from the image $Z^{\text{fix}} \subseteq Z$ of the fixed locus \cite[Tag \href{https://stacks.math.columbia.edu/tag/07S7}{07S7}]{Stacks}. In particular, $Z$ is smooth outside $Z^{\text{fix}}$ by \autoref{Lem torsor smooth}. Since $\P^N$ is geometrically integral, the differential form $\omega$ of \autoref{Rmk alpha_p torsor} is nontrivial by \autoref{Lem torsor}.

Repeatedly applying Poonen's Bertini theorem \cite{Poo} produces a smooth projective surface $X \subseteq Z \setminus Z^{\text{fix}}$, which we may choose such that $\omega|_X$ is not identically zero (for example by specifying a tangency condition at a point $x \in Z \setminus Z^{\text{fix}}$).

The inverse image $Y \subseteq \P^N$ of $X$ is a complete intersection, and $\pi \colon Y \to X$ is an $\balpha_p$-torsor since $Y \subseteq Z\setminus Z^{\text{fix}}$. Hence $Y$ is geometrically integral by \autoref{Lem torsor}, since $\omega|_{X_{\bar \F_p}} \neq 0$. Since $Y$ is a geometrically integral complete intersection of dimension $\geq 2$, we find $H^1(Y,\mathcal O_Y) = 0$ and $H^0(Y,\Omega_Y^1) = 0$.
In particular, $\PIC_Y^0 = 0$, so \autoref{Lem G-torsor sequence} gives
\[
\PIC_X^0 = \balpha_p^\vee = \balpha_p.
\]
Thus, $H^1(X,\mathcal O_X) = \Lie(\PIC_X^0)$ is $1$-dimensional. The short exact sequence
\[
0 \to H^1(X,\balpha_p) \to H^1(X,\mathcal O_X) \stackrel{(-)^p}\rA H^1(X,\mathcal O_X)
\]
shows that $H^1(X,\mathcal O_X)$ is spanned by the image of the nontrivial $\balpha_p$-torsor $\pi$. In the notation of \autoref{Rmk alpha_p torsor}, the generator of $H^1(X,\mathcal O_X)$ is given by the \u Cech $1$-cocycle $(g_{ij})$. We claim that its image in $H^1(X,\Omega_X^1)$ is nonzero, i.e.\ $(dg_{ij})$ is not a \u Cech coboundary. Suppose it were; say $(dg_{ij}) = \eta_i - \eta_j$ for forms $\eta_i \in H^0(U_i,\Omega_{U_i}^1)$. Then consider the $1$-form $dz_i - \eta_i$ on $\pi^{-1}(U_i)$. We have
\[
(z_i-z_j)^p = f_i - f_j = g_{ij}^p,
\]
hence $z_i - z_j = g_{ij}$ since $Y$ is integral. Hence,
\[
(dz_i - \eta_i)- (dz_j - \eta_j) = dg_{ij} - \eta_i + \eta_j = 0,
\]
showing that the $dz_i - \eta_i$ glue to a global $1$-form on $Y$, which is nonzero because $dz_i$ is not pulled back from $U_i$. This contradicts the vanishing of $H^0(Y,\Omega_Y^1)$. We conclude that the generator $(g_{ij})$ of $H^1(X,\mathcal O_X)$ does not survive the Hodge--de Rham spectral sequence.

On the other hand, the kernel of $\Omega_X^1 \to \pi_* \Omega_Y^1$ is locally generated by $\omega_i$, since the cotangent complex of $\pi^{-1}(U_i) \to U_i$ is the complex
\[
\mathcal I_i/\mathcal I_i^2 \stackrel 0\rA \Omega_{\pi^{-1}(U_i)/U_i}^1,
\]
where $\mathcal I_i$ is the ideal sheaf on $U_i \times \A^1$ given by $z_i^p-f_i$. We get an exact sequence
\[
0 \to \omega\mathcal O_X \to \Omega_X^1 \to \pi_*\Omega_Y^1,
\]
so vanishing of $H^0(Y,\Omega_Y^1)$ gives $H^0(X,\omega\mathcal O_X) = H^0(X,\Omega_X^1)$, i.e.\ $H^0(X,\Omega_X^1)$ is $1$-dimensional and generated by $\omega$. Note that $d\omega = 0$ since $\omega$ is locally exact, so $\omega$ survives the Hodge--de Rham spectral sequence. Hence, $\hdR^1(X) = 1$.
\end{proof}
\vskip-\lastskip
The constructions of \autoref{Prop example Hodge symmetry} and \autoref{Prop example Hodge-de Rham degeneration} are both covered by \cite[Prop.\ 4.2.3]{Ray}, but we needed a more detailed analysis of the Hodge numbers. Using Poonen's Bertini theorems, we were also able to construct examples over the prime field $\F_p$. The smoothness claim for $\balpha_p$-quotients is not explained in [\emph{loc.\ cit.}].

\begin{Rmk}\label{Rmk not smooth}
In general we cannot expect the complete intersection $Y \subseteq \P^N$ of the proof of \autoref{Prop example Hodge-de Rham degeneration} to be smooth. Indeed, the degree of $Y$ is large with respect to $p$ in order for $Y$ to be $\balpha_p$-invariant. But a smooth complete intersection of sufficiently high degree does not admit any infinitesimal automorphisms, so in particular cannot have a free $\balpha_p$-action.
\end{Rmk}

\section{The Hodge ring of varieties}\label{Sec Hodge ring}
We modify the definition of the Hodge ring of \cite{KS} to account for the failure of Hodge symmetry in characteristic $p > 0$.

\begin{Def}\label{Def Hodge ring}
Consider $\Z[x,y,z]$ as a graded ring where $x$ and $y$ both have degree $0$ and $z$ has degree $1$. The \emph{Hodge ring of varieties over $k$} is the graded subring $\H_* \subseteq \Z[x,y,z]$ whose $n$-th graded piece is
\[
\H_n = \left\{\left(\ \sum_{i,j = 0}^nh^{i,j}x^iy^j\right)z^n\ \Bigg|\ \begin{array}{c}h^{n-i,n-j} = h^{i,j} \\ \text{ for all } i,j \in \{0,\ldots,n\}\end{array}\right\}.
\]
This is different from the notation of \cite{KS}, where $\H_*$ denotes the ring where moreover Hodge symmetry holds.
\end{Def}

\begin{Def}\label{Def h(X)}
Write $h \colon \Var_k \to \H_*$ for the map that sends an $n$-dimensional variety $X$ to its \emph{(formal) Hodge polynomial}
\[
h(X) = \left(\ \sum_{i,j=0}^nh^{i,j}(X)x^iy^j\right)z^n.
\]
where $h^{i,j}(X) = h^j(X,\Omega_X^i)$ as usual. The K\"unneth formula \cite[Tag \href{https://stacks.math.columbia.edu/tag/0BED}{0BED}]{Stacks} shows that $h$ preserves products, i.e.\ $h(X \times Y) = h(X) \cdot h(Y)$ for all $X, Y \in \Var_k$.

To justify the name \emph{Hodge ring of varieties}, we show in \autoref{Cor Hodge generated by varieties} that $\H_*$ is the subring (equivalently, subgroup) of $\Z[x,y,z]$ generated by the image of $h$.
\end{Def}

\begin{Thm}\label{Thm Hodge ring presentation}
Consider the graded ring $\Z[A,B,C,D]$ where $A$ and $B$ have degree $1$, and $C$ and $D$ have degree $2$. Then the map
\[
\phi \colon \Z[A,B,C,D] \to \H_*
\]
given by
\begin{align*}
\phi(A) &= (1+xy)z, & \phi(C) &= xy \cdot z^2,\\
\phi(B) &= (x+y)z,  & \phi(D) &= (x+xy^2)z^2
\end{align*}
is a surjection of graded rings, with kernel generated by
\[
G := D^2 - ABD + C(A^2+B^2-4C).
\]
\end{Thm}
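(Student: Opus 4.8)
The plan is to factor the argument through the \emph{symmetric} subring, where I can invoke the characteristic $0$ theorem of Kotschick--Schreieder, and to handle the antisymmetric part directly. Write $a = \phi(A)$, $b = \phi(B)$, $c = \phi(C)$, $d = \phi(D)$, and let $\tau$ be the involution of $\Z[x,y,z]$ swapping $x$ and $y$; it preserves $\H_*$. Set $d' = \tau d = (y + x^2y)z^2$, and record the two identities $d + d' = ab$ and $d - d' = (x-y)(1-xy)z^2$, both verified directly. First I would check that $\phi$ lands in $\H_*$: each of $a,b,c,d$ satisfies Serre duality in the appropriate degree, and $\H_*$ is closed under multiplication because the duality condition $(xy)^n p(x^{-1},y^{-1}) = p$ is multiplicative. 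A term-by-term expansion then gives $\phi(G) = 0$, so $(G) \subseteq \ker\phi$.

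For surjectivity I would first record that the $\tau$-invariant subring $\H^{\mathrm{sym}}_*$ (where Serre duality \emph{and} Hodge symmetry hold) equals $\Z[a,b,c]$: this is exactly \cite[Cor.~3]{KS} after the change of generators $h(\P^1) = a$, $h(E) = a+b$, $h(\P^2) = a^2 - c$. The heart of the matter is then the following description of the $\tau$-antisymmetric part $\H_n^- = \{W \in \H_n : \tau W = -W\}$:
\[
\H_n^- = (x-y)(1-xy)z^2 \cdot \H^{\mathrm{sym}}_{n-2}.
\]
The inclusion $\supseteq$ is a direct check of antisymmetry and of degree-$n$ Serre duality. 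For $\subseteq$, an antisymmetric $w(x,y)$ is divisible by $(x-y)$, say $w = (x-y)v$ with $v$ symmetric; substituting into the degree-$n$ Serre duality relation yields $(xy)^{n-1}v(x^{-1},y^{-1}) = -v$. Evaluating along the locus $xy = 1$ — i.e.\ in $\Z[x,y]/(1-xy) \cong \Z[x,x^{-1}]$ via $y \mapsto x^{-1}$ — and using symmetry of $v$ forces $v(x,x^{-1}) = 0$, hence $(1-xy) \mid v$; the cofactor $S$ is symmetric and satisfies degree-$(n-2)$ Serre duality, so $Sz^{n-2} \in \H^{\mathrm{sym}}_{n-2}$.

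Granting this, surjectivity follows cleanly. Given $P \in \H_n$, the element $P - \tau P$ lies in $\H_n^-$, so $P - \tau P = R\,(d-d')$ for a unique $R \in \H^{\mathrm{sym}}_{n-2} = \Z[a,b,c]_{n-2}$. Setting $Q = P - Rd$, a short computation using $d + d' = ab$ shows $\tau Q = Q$, so $Q \in \H^{\mathrm{sym}}_n = \Z[a,b,c]_n$; then $P = Q + Rd \in \im\phi$. Thus $\im \phi = \Z[a,b,c] + \Z[a,b,c]\,d$. For the kernel I would use that $G$ is monic of degree $2$ in $D$ over $\Z[A,B,C]$, so $\Z[A,B,C,D]/(G)$ is free on $\{1,\bar D\}$ as a $\Z[A,B,C]$-module; it then suffices to show the induced map $\bar\phi$ is injective. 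If $\phi(Q_1) + \phi(Q_2)\,d = 0$ with $Q_1,Q_2 \in \Z[A,B,C]$, applying $\tau$ and subtracting gives $\phi(Q_2)(d-d') = 0$; since $d-d' \neq 0$ and $\Z[x,y,z]$ is a domain, $\phi(Q_2) = 0$, whence $Q_2 = 0$ (as $\Z[A,B,C] \to \H^{\mathrm{sym}}$ is an isomorphism by \cite{KS}), and then $Q_1 = 0$. Hence $\ker\phi = (G)$.

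The main obstacle is the description of $\H_n^-$ above: obtaining divisibility by the \emph{second} factor $(1-xy)$, via the evaluation on the locus $xy = 1$, is the one non-formal step. Everything else is multiplicativity of the Serre duality condition together with the $\tau$-symmetrization bookkeeping, and the reduction to the Kotschick--Schreieder computation in the symmetric case.
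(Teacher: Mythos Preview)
Your proof is correct but follows a genuinely different route from the paper. The paper's argument is a rank-counting one: it computes that both $\Z[A,B,C,D]/(G)$ and $\H_n$ are free of the same rank $r_n$, then shows the induced map is an isomorphism by checking injectivity modulo every prime $\ell$, using that $G$ is irreducible mod $\ell$ (so the source is a $3$-dimensional domain) and that the Jacobian of $(A,B,C)$ in $(x,y,z)$ is invertible at a point (so the image has Krull dimension $3$). No reference to the Kotschick--Schreieder isomorphism is used as input; the paper in fact remarks that its proof is a simplification of theirs.

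Your approach instead treats the problem structurally via the swap involution $\tau$: you import the Kotschick--Schreieder result $\H^{\mathrm{sym}}_* = \Z[a,b,c]$ as a black box, identify $\H_n^- = (d-d')\cdot \H^{\mathrm{sym}}_{n-2}$ by the divisibility argument through $(x-y)$ and then $(1-xy)$, and deduce that $\H_*$ is free of rank $2$ over $\Z[a,b,c]$ on $\{1,d\}$. This immediately explains \emph{why} the kernel is a single monic quadratic in $D$ and makes the quadratic-extension heuristic in the paper's introduction literal. The trade-off is that your proof is not self-contained (it needs \cite{KS}), whereas the paper's Jacobian-plus-irreducibility check is shorter and independent but gives less insight into the module structure. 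Both are clean; yours is arguably more conceptual, the paper's more economical.
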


\begin{Rmk}
Kotschick and Schreieder show \cite[Thm.\ 6]{KS} that $\Z[A,B,C]$ maps isomorphically onto the subring of $H_*$ where Hodge symmetry holds. Our proof is a modification and simplification of theirs.
\end{Rmk}

\begin{Lemma}\label{Lem free of the same rank}
Let $\psi \colon M \to N$ be a homomorphism between free abelian groups of the same finite rank. Then $\psi$ is an isomorphism if and only if $\psi \otimes \F_\ell$ is injective for all primes $\ell$.
\end{Lemma}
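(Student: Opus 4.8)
The plan is to reduce the statement to a single determinant computation. Choose bases of $M$ and $N$; since both are free of the same finite rank $r$, the map $\psi$ is then represented by a square integer matrix $A$ of size $r$. Under this identification, ``$\psi$ is an isomorphism'' is equivalent to $A \in \GL_r(\Z)$, i.e.\ to $\det A = \pm 1$. So the whole lemma becomes a statement purely about the integer $\det A$.

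The key observation is that for each prime $\ell$, the reduction $\psi \otimes \F_\ell$ is a linear map $\F_\ell^r \to \F_\ell^r$ between vector spaces of the \emph{same} finite dimension. Over a field such a square map is injective if and only if it is bijective, if and only if its determinant is nonzero; and since $\det$ commutes with reduction, that determinant is $\det(A) \bmod \ell$. Hence $\psi \otimes \F_\ell$ is injective precisely when $\ell \nmid \det A$. The equal-rank hypothesis is exactly what is used here, as it is what lets injectivity upgrade to bijectivity.

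Putting these together, the condition ``$\psi \otimes \F_\ell$ is injective for all primes $\ell$'' translates into ``$\det A$ is divisible by no prime $\ell$''. An integer is divisible by no prime if and only if it equals $\pm 1$; in particular $\det A = 0$ is excluded, since $0$ is divisible by every prime. Therefore the hypothesis is equivalent to $\det A = \pm 1$, which is exactly the condition for $\psi$ to be an isomorphism. This proves both implications simultaneously.

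I do not expect a genuine obstacle here: once phrased via the matrix $A$ and its determinant, the argument rests only on the elementary facts that a square matrix over a field is injective iff bijective, that reduction mod $\ell$ is compatible with $\det$, and that $\pm 1$ are the only integers with no prime divisors. The sole point requiring a moment's care is the degenerate case $\det A = 0$, i.e.\ $\psi$ failing to be injective over $\Z$; this is handled by the remark that $0$ is divisible by every prime, so the hypothesis already rules it out. (Alternatively, one could run the same argument through the Smith normal form $A = U D V$ with $U,V \in \GL_r(\Z)$ and $D$ diagonal, tracking the elementary divisors $d_i$ in place of $\det A = \pm\prod_i d_i$, but the determinant formulation is shorter.)
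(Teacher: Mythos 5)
Your argument is correct, and it takes a mildly different route from the paper's. You choose bases, represent $\psi$ by a square integer matrix $A$, and reduce the whole lemma to the arithmetic fact that $\det A=\pm 1$ if and only if no prime divides $\det A$, using that $\det$ commutes with reduction mod $\ell$ and that a square matrix over a field is injective iff its determinant is nonzero; you correctly flag that the degenerate case $\det A=0$ is excluded because $0$ is divisible by every prime. The paper instead argues basis-freely via the cokernel: by right exactness of $-\otimes\F_\ell$ and a dimension count, injectivity of $\psi\otimes\F_\ell$ between spaces of equal dimension forces $\operatorname{coker}(\psi)\otimes\F_\ell=0$ for every $\ell$, and a finitely generated abelian group all of whose reductions vanish is zero, whence $\psi$ is onto and then bijective by a rank count. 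Both are short elementary proofs of the same fact; yours is more computational and settles both implications at once through the single invariant $\det A$ (essentially the Smith normal form argument you mention in passing), while the paper's phrasing avoids choosing bases and transports verbatim to finitely generated modules over more general rings. There is no gap in your version.
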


\begin{proof}
The cokernel $C$ is nonzero if and only if $C \otimes \F_\ell \neq 0$ for all primes $\ell$, so the result follows from right exactness of $- \otimes \F_\ell$ and a dimension argument.
\end{proof}

\begin{Def}\label{Def r_n}
Write $r_n$ for the number
\[
r_n := \left\{\begin{array}{ll} \frac{(n+1)^2 + 1}{2}, & n \equiv 0 \pmod 2, \\ \frac{(n+1)^2}{2}, & n \equiv 1 \pmod 2.\end{array}\right.
\]
Then $\H_n$ is free of rank $r_n$, with basis given by
\begin{align*}
&(x^i y^j + x^{n-i}y^{n-j})z^n, & (i,j) &\neq (n-i,n-j),\\
&x^iy^j z^n,  & (i,j) &= \left(\frac{n}{2},\frac{n}{2}\right).
\end{align*}
\end{Def}

\begin{Lemma}\label{Lem dimension of R}
The degree $n$ part of the algebra $R = \Z[A,B,C,D]/(G)$ is free of rank $r_n$.
\end{Lemma}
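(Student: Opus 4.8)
The plan is to exploit that the single relation $G$ is \emph{monic of degree $2$ in $D$}. Writing $S = \Z[A,B,C]$ for the graded polynomial subalgebra (with $\deg A = \deg B = 1$ and $\deg C = 2$), the generator
\[
G = D^2 - ABD + C(A^2 + B^2 - 4C)
\]
is homogeneous of degree $4$ and its leading coefficient in $D$ is a unit. Hence $R = S[D]/(G)$ is a free graded $S$-module on the basis $\{1, D\}$, where $1$ sits in degree $0$ and $D$ in degree $2$. Concretely, the relation lets us rewrite $D^2 = ABD - C(A^2+B^2-4C)$, and both sides are homogeneous of degree $4$, so this reduction respects the grading. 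Taking degree-$n$ parts gives $R_n \cong S_n \oplus S_{n-2}\cdot D$ (with the convention $S_k = 0$ for $k < 0$), and therefore
\[
\rk R_n = \dim_\Z S_n + \dim_\Z S_{n-2}.
\]

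First I would compute $\dim_\Z S_n$, the number of monomials $A^a B^b C^c$ with $a + b + 2c = n$ and $a,b,c \geq 0$. Fixing $c \in \{0, \ldots, \lfloor n/2 \rfloor\}$ leaves $a + b = n - 2c$, which has $n - 2c + 1$ solutions; summing this arithmetic progression gives the closed form $\dim_\Z S_n = (\lfloor n/2 \rfloor + 1)(n + 1 - \lfloor n/2 \rfloor)$. Splitting by parity, this reads
\[
\dim_\Z S_{2m} = (m+1)^2, \qquad \dim_\Z S_{2m+1} = (m+1)(m+2),
\]
and these formulas remain correct (returning $0$) when the argument is negative, which handles the low-degree edge cases automatically.

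Finally I would substitute into $\rk R_n = \dim_\Z S_n + \dim_\Z S_{n-2}$ and match against \autoref{Def r_n}. For $n = 2m$ this gives $(m+1)^2 + m^2 = 2m^2 + 2m + 1 = \frac{(2m+1)^2 + 1}{2} = r_{2m}$, and for $n = 2m+1$ it gives $(m+1)(m+2) + m(m+1) = 2(m+1)^2 = \frac{(2m+2)^2}{2} = r_{2m+1}$, as required.

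The computation is entirely routine; the only point deserving care is the very first step. It is essential that freeness of $R$ over $S$ holds integrally (over $\Z$, not merely after tensoring with a field), and this is exactly what the monicity of $G$ in $D$ guarantees: a monic polynomial produces a finite free module regardless of the base ring, and homogeneity of $G$ ensures the resulting splitting is compatible with the grading used to define $r_n$. No genuine obstacle arises beyond keeping the parity bookkeeping straight.
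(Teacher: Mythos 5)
Your proof is correct and follows essentially the same route as the paper: the paper likewise takes $\{A^iB^jC^k\} \cup \{A^iB^jC^kD\}$ as a basis of $R_n$ (i.e.\ freeness over $\Z[A,B,C]$ on $\{1,D\}$) and then counts monomials, arriving at the same identity $r_n = r'_n + r'_{n-2}$. Your explicit justification of the basis via monicity of $G$ in $D$ is a welcome extra detail that the paper leaves implicit.
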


\begin{proof}
It has a basis given by
\[
\left\{A^iB^jC^k\ \bigg|\ \begin{array}{c}i,j,k \in \Z_{\geq 0},\\i+j+2k = n\end{array}\right\} \cup \left\{ A^iB^jC^kD\ \bigg|\ \begin{array}{c}i,j,k \in \Z_{\geq 0},\\i+j+2k = n-2\end{array}\right\}.
\]
Moreover, a simple induction shows that
\[
r'_n := \# \left\{ (i,j,k) \in \Z_{\geq 0}^3\ \bigg|\ i+j+2k = n \right\} = \left\{\begin{array}{ll} \frac{(n+2)^2}{4}, & n \equiv 0 \pmod 2, \\ \frac{(n+2)^2-1}{4}, & n \equiv 1 \pmod 2.\end{array}\right.
\]
Thus, the result follows since $\rk R_n = r'_n + r'_{n-2} = r_n$.
\end{proof}

\begin{proof}[Proof of \autoref{Thm Hodge ring presentation}.]
Clearly $\phi$ is a homomorphism of graded rings, and one easily verifies that $G \in \ker(\phi)$. Thus, we get an induced map
\[
\psi \colon R \to \H_*
\]
between graded rings that have the same rank in each degree by \autoref{Def r_n} and \autoref{Lem dimension of R}. By \autoref{Lem free of the same rank} it suffices to show that $\psi_\ell = \psi \otimes \F_\ell$ is injective for every prime $\ell$.

Note that $R \otimes \F_\ell = \F_\ell[A,B,C,D]/(G)$ is a $3$-dimensional domain since $G$ is irreducible (for example, its restriction modulo $B$ is Eisenstein at $(C) \subseteq \F_\ell[A,C]$ when viewed as a polynomial in $D$). Thus, to show injectivity of $\psi_\ell$, it is enough to show that the image $\im(\psi_\ell) \subseteq \F_\ell[x,y,z]$ has Krull dimension $3$. But the elements $\psi_\ell(A)$, $\psi_\ell(B)$, $\psi_\ell(C)$ are algebraically independent, for example because the Jacobian
\[
J = \begin{pmatrix}
\tfrac{dA}{dx} & \tfrac{dA}{dy} & \tfrac{dA}{dz}\\[.3em]
\tfrac{dB}{dx} & \tfrac{dB}{dy} & \tfrac{dB}{dz}\\[.3em]
\tfrac{dC}{dx} & \tfrac{dC}{dy} & \tfrac{dC}{dz}
\end{pmatrix} = \begin{pmatrix}
yz   & xz   & 1+xy \\
z    & z    & x+y \\
yz^2 & xz^2 & 2xyz
\end{pmatrix}
\]
is invertible at $(x,y,z) = (0,1,1)$.
\end{proof}

\begin{Cor}\label{Cor generated by P^1, E, P^2, and S}
Let $E$ be an elliptic curve, and let $S$ be any surface for which $h^{1,0}(S)-h^{0,1}(S) = \pm 1$ (for example, the surface constructed in \autoref{Prop example Hodge symmetry}). Then $\H_*$ is generated by $\P^1$, $E$, $\P^2$, and $S$, subject only to a monic quadratic equation in $S$ over $\Z[\P^1,E,\P^2]$.
\end{Cor}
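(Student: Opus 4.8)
The plan is to lean entirely on the explicit presentation of \autoref{Thm Hodge ring presentation}, which identifies $\H_* \cong \Z[A,B,C,D]/(G)$ via $\phi$, and to translate the four generators $\P^1, E, \P^2, S$ into this language. First I would compute the four Hodge polynomials by inspection of the Hodge diamonds: the counts give $h(\P^1) = (1+xy)z = \phi(A)$, $h(E) = (1+x+y+xy)z = \phi(A+B)$, and $h(\P^2) = (1+xy+x^2y^2)z^2 = \phi(A^2-C)$. Writing $a = h^{1,0}(S)$, $b = h^{0,1}(S)$, $c = h^{2,0}(S) = h^{0,2}(S)$, $d = h^{1,1}(S)$ and filling in the rest of the Hodge diamond of $S$ by Serre duality, I would verify the single identity
\[
h(S) = \phi\big(A^2 + bAB + cB^2 + (a-b)D + (d-2-2c)C\big),
\]
which is a routine monomial-by-monomial check against the images $\phi(A^2), \phi(AB), \phi(B^2), \phi(C), \phi(D)$.

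The decisive feature is that the coefficient of $D$ is $a - b = h^{1,0}(S) - h^{0,1}(S) = \pm 1$ by hypothesis. From $\phi(A)$, $\phi(A+B)$, $\phi(A^2-C)$ one recovers $\phi(B) = \phi(A+B)-\phi(A)$ and $\phi(C) = \phi(A)^2 - \phi(A^2-C)$, so the subring generated by $\P^1, E, \P^2$ is exactly $\phi(\Z[A,B,C])$; since $G$ is monic of degree $2$ in $D$, this subring is free of rank $2$ in $\H_*$, so $\phi|_{\Z[A,B,C]}$ is injective and $\Z[\P^1,E,\P^2]$ is the free polynomial algebra. Because the coefficient of $D$ in the displayed formula is the unit $\pm 1$, adjoining $h(S)$ to $\phi(A),\phi(B),\phi(C)$ recovers $\phi(D)$; hence all of $\phi(A),\ldots,\phi(D)$ lie in the ring generated by $\P^1, E, \P^2, S$, and surjectivity of $\phi$ shows these four generate $\H_*$.

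For the relation I would carry out the substitution at the level of the presentation. Setting $s := A^2 + bAB + cB^2 + (a-b)D + (d-2-2c)C$, the equality $(a-b)^2 = 1$ means $(A,B,C,D) \mapsto (A,B,C,s)$ is a $\Z$-automorphism of $\Z[A,B,C,D]$ (solve for $D$), and substituting $D = (a-b)\big(s - A^2 - bAB - cB^2 - (d-2-2c)C\big)$ into $G$ produces a polynomial $\bar G$ that is monic of degree $2$ in $s$ with coefficients in $\Z[A,B,C]$. Combining with the triangular identification $\Z[A,B,C] \cong \Z[\P^1,E,\P^2]$ and $\phi(s) = h(S)$ then yields $\H_* \cong \Z[\P^1,E,\P^2][S]/(\bar G)$ with $\bar G$ monic quadratic in $S$, which is exactly the claim.

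The only real obstacle is integrality rather than anything deep. Every step is valid over $\Q$ for any value of $a-b$, but to make both the change of variables $D \leftrightarrow s$ and the identification of $\Z[\P^1,E,\P^2]$ with a genuine polynomial ring invertible over $\Z$, the coefficient of $D$ in $h(S)$ must be a unit in $\Z$, i.e.\ precisely $a - b = \pm 1$. This is why the hypothesis $h^{1,0}(S)-h^{0,1}(S) = \pm 1$ is imposed, and why Serre's surface from \autoref{Prop example Hodge symmetry}, with $h^{1,0}-h^{0,1} = 1$, serves as a valid choice of $S$.
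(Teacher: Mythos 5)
Your proposal is correct and follows essentially the same route as the paper: identify $A$, $B$, $C$ with integral combinations of $h(\P^1)$, $h(E)$, $h(\P^2)$, observe that $D$ differs from $\pm h(S)$ by an element of $\Z[A,B,C]$ precisely because the coefficient $h^{1,0}(S)-h^{0,1}(S)=\pm 1$ is a unit, and transport the monic quadratic relation $G$ through this triangular change of variables. You merely make explicit the linear combination and the freeness/injectivity checks that the paper's proof leaves implicit.
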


\begin{proof}
In the notation of \autoref{Thm Hodge ring presentation}, we have $A = h(\P^1)$, $B = h(E) - h(\P^1)$, and $C = h(\P^1 \times \P^1) - h(\P^2)$. Finally, $D$ can be obtained from $\pm h(S)$ by adding a suitable linear combination of $A^2$, $AB$, $B^2$, and $C$, since $h^{1,0}(S) - h^{0,1}(S) = \pm 1$. This proves the first statement, and the second follows from \autoref{Thm Hodge ring presentation} since $h(S)$ differs from $\pm D$ by a translation in $\Z[\P^1,E,\P^2]$.
\end{proof}

\begin{Cor}\label{Cor Hodge generated by varieties}
The Hodge ring of varieties over $k$ is generated by smooth projective varieties that are defined over $\F_p$, admit a lift to $\Z_p$, and for which the Hodge--de Rham spectral sequence degenerates.
\end{Cor}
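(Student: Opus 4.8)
The plan is to check that the four explicit generators produced in \autoref{Cor generated by P^1, E, P^2, and S}---namely $\P^1$, an elliptic curve $E$, $\P^2$, and a surface $S$ with $h^{1,0}(S) - h^{0,1}(S) = \pm 1$---can each be chosen to be defined over $\F_p$, to lift to $\Z_p$, and to have a degenerating Hodge--de Rham spectral sequence. Since for smooth proper varieties the groups $H^j(X,\Omega_X^i)$ commute with extension of the ground field, the Hodge polynomial $h(X)$ is unchanged under base change; so it is enough to realise the generators as base changes to $k$ of smooth projective $\F_p$-varieties with the stated properties, after which \autoref{Cor generated by P^1, E, P^2, and S} applies unchanged.

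First I would dispose of the three ``classical'' generators. The projective spaces $\P^1$ and $\P^2$ are defined over $\Z$, hence over $\F_p$ with tautological lift to $\Z_p$; their Hodge--de Rham spectral sequences degenerate, since $h^{i,j}(\P^n) = \delta_{ij}$ while $\hdR^m(\P^n) = \#\{i : 2i = m,\ 0 \le i \le n\}$, so that $\hdR^m = \sum_{i+j=m} h^{i,j}$ in every degree. For the elliptic curve I would take the special fibre of any Weierstrass curve over $\Z_p$ with good reduction (one exists for every $p$, e.g.\ by choosing integral Weierstrass coefficients whose discriminant is a $p$-adic unit); this $E$ is defined over $\F_p$ and lifts to $\Z_p$, and degeneration of the Hodge--de Rham spectral sequence of a smooth proper curve is classical.

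It remains to choose $S$. Here I would take the surface of \autoref{Prop example Hodge symmetry}: it is defined over $\F_p$, admits a lift to $\Z_p$ by construction, and satisfies $h^{1,0}(S) - h^{0,1}(S) = 1$, so it is an admissible choice of generator. Its Hodge--de Rham spectral sequence degenerates by \autoref{Rmk Hodge-de Rham lift}. Feeding these four choices into \autoref{Cor generated by P^1, E, P^2, and S} then yields the claim.

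The one point that requires genuine input is the degeneration for $S$. In contrast with projective spaces and curves, a surface in characteristic $p$ can have a non-degenerating Hodge--de Rham spectral sequence---this is exactly \autoref{Prop example Hodge-de Rham degeneration}---so the property is not formal and must be extracted from the $\Z_p$-lift via \cite{DelIll}. The Deligne--Illusie criterion applies directly once $\dim < p$, covering every $p \ge 3$; for $p = 2$ the truncated decomposition of \cite{DelIll} still forces degeneration in total degrees $\le 1$ (this is the part checked by hand in \autoref{Rmk Hodge-de Rham lift}), and the remaining degrees then follow from the Serre-duality self-duality of the Hodge--de Rham spectral sequence of a surface. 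This is the step I expect to be the main obstacle.
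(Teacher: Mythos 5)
Your proposal is correct and follows essentially the same route as the paper: take $\P^1$, $\P^2$, an elliptic curve over $\F_p$ lifted to $\Z_p$, and the surface of \autoref{Prop example Hodge symmetry}, then invoke \autoref{Cor generated by P^1, E, P^2, and S} and \autoref{Rmk Hodge-de Rham lift}. Your extra care at $p=2$ (where the hypothesis $\dim X < p$ of \cite{DelIll} fails for a surface, so one uses the truncated decomposition in degrees $\le 1$ together with duality and the Euler characteristic) is a worthwhile refinement of a point the paper leaves to the citation, but it does not change the argument.
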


\begin{proof}
Clearly $\P^1$ and $\P^2$ can be defined over $\F_p$ and lifted to $\Z_p$. For $E$ we may choose an elliptic curve over $\F_p$ and lift to $\Z_p$, and for $S$ we may choose the surface of \autoref{Prop example Hodge symmetry}. The degeneration claim is \autoref{Rmk Hodge-de Rham lift}.
\end{proof}
\vskip-\lastskip
We deduce from the above presentation the following theorems.

\begin{Thm}
The only universal congruences between the Hodge numbers $h^{i,j}(X)$ of smooth proper varieties $X$ of dimension $n$ over $k$ are those given by Serre duality.\hfill\qedsymbol
\end{Thm}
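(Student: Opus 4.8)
The plan is to reduce the congruence statement to the assertion that the Hodge polynomials of dimension-$n$ varieties generate $\H_n$ \emph{as an abelian group}, and not merely up to finite index. A universal congruence of modulus $m$ is an integral linear functional $\lambda$ on the Hodge numbers with $\lambda(h(X)) \equiv 0 \pmod m$ for every smooth proper $X$ of dimension $n$. Because every variety satisfies Serre duality, the value $\lambda(h(X))$ depends only on the restriction $\lambda|_{\H_n}$, and the kernel of $\lambda \mapsto \lambda|_{\H_n}$ is precisely the $\Z$-span of the Serre-duality functionals $h^{i,j} - h^{n-i,n-j}$ — that is, exactly the congruences we are trying to account for. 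So, writing $L_n \subseteq \H_n$ for the subgroup generated by $\{h(X) : \dim X = n\}$, it suffices to prove $L_n = \H_n$: for then any $\lambda$ vanishing modulo $m$ on all $h(X)$ vanishes modulo $m$ on a generating set of $\H_n$, hence on all of $\H_n$, so modulo $m$ it is a combination of Serre-duality relations.

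First I would establish the integral refinement of \autoref{Cor Hodge generated by varieties}, namely $L_n = \H_n$. By \autoref{Thm Hodge ring presentation} the induced map $\psi$ is an \emph{isomorphism} of graded rings, so by \autoref{Lem dimension of R} the images of the monomials $A^iB^jC^k$ (with $i+j+2k=n$) and $A^iB^jC^kD$ (with $i+j+2k=n-2$) form a $\Z$-basis of $\H_n$. It therefore suffices to show that each of these basis monomials lies in $L_n$.

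Next I would expand the monomials using the explicit generators of \autoref{Cor generated by P^1, E, P^2, and S}, namely $A = h(\P^1)$, $B = h(E) - h(\P^1)$, and $C = h(\P^1 \times \P^1) - h(\P^2)$, together with the fact that $D$ differs from $\pm h(S)$ by an integral combination of $A^2$, $AB$, $B^2$, and $C$, where $S$ is the surface of \autoref{Prop example Hodge symmetry}. Since each generator is a homogeneous $\Z$-linear combination of Hodge polynomials of these base varieties, and $h$ is multiplicative (\autoref{Def h(X)}), expanding any monomial $A^iB^jC^kD^\delta$ with $\delta \in \{0,1\}$ and total degree $n$ produces a $\Z$-linear combination of Hodge polynomials $h(X_1 \times \cdots \times X_r)$ of products of base varieties, each product having dimension exactly $n$. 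Every term thus lies in $L_n$, whence so does each basis monomial, giving $L_n = \H_n$.

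I expect the essential point — and the precise reason this refines the rational statement \autoref{Thm Hodge}, which only yields $L_n \otimes \Q = \H_n \otimes \Q$ — to be the integrality of the last step: one must hit the full lattice $\H_n$ rather than a finite-index sublattice. This is exactly where the isomorphism (as opposed to a mere rational surjection) in \autoref{Thm Hodge ring presentation} is used, and, concretely, where the surface $S$ with $h^{1,0}(S) - h^{0,1}(S) = \pm 1$ is indispensable: it is $S$ that realizes the generator $D$ of the ``quadratic'' part of the lattice by an \emph{actual} variety, so that no congruence of any modulus can separate the Hodge polynomials beyond what Serre duality already forces.
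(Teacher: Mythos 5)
Your argument is correct and is essentially the paper's own (the paper leaves this deduction implicit, marking the theorem with a qed symbol right after \autoref{Cor Hodge generated by varieties}): both proofs rest on the fact that \autoref{Thm Hodge ring presentation} and \autoref{Cor generated by P^1, E, P^2, and S} exhibit a $\Z$-basis of $\H_n$ consisting of integral combinations of Hodge polynomials of $n$-dimensional products of $\P^1$, $E$, $\P^2$, and $S$, so that any congruence valid on all varieties holds on all of $\H_n$ and is therefore congruent mod $m$ to a combination of Serre-duality functionals. Your emphasis on the integrality of the spanning statement --- which is exactly what upgrades the rational result to arbitrary moduli, and is where the hypothesis $h^{1,0}(S)-h^{0,1}(S)=\pm 1$ enters --- is precisely the right point.
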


\begin{Thm}\label{Thm linear relations Hodge}
The only universal linear relations between the Hodge numbers $h^{i,j}(X)$ of smooth proper varieties $X$ of dimension $n$ over $k$ are those given by Serre duality.\hfill\qedsymbol
\end{Thm}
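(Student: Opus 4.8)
The plan is to deduce the theorem from the fact, recorded in \autoref{Def h(X)} and established in \autoref{Cor Hodge generated by varieties}, that $\H_*$ is the subgroup of $\Z[x,y,z]$ generated by $\im(h)$. Fix $n$ and regard a linear relation among the Hodge numbers as a $\Q$-linear functional $\lambda = \sum_{i,j} c_{i,j}\, h^{i,j}$ on the space $V = \Q^{\{0,\dots,n\}^2}$ of formal Hodge vectors of dimension $n$; it is \emph{universal} precisely when $\lambda(h(X)) = 0$ for every smooth proper $X$ with $\dim X = n$. By \autoref{Def Hodge ring}, the subspace $\H_n \otimes \Q \subseteq V$ is exactly the solution space of the homogeneous linear system $\{h^{i,j} - h^{n-i,n-j} = 0\}$, so its annihilator in $V^*$ is the span $W$ of the functionals $h^{i,j} - h^{n-i,n-j}$. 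In other words, $W$ is precisely the space of relations \emph{given by Serre duality}, and the inclusion $W \subseteq \{\text{universal relations}\}$ is clear, since every $h(X)$ satisfies Serre duality.

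It therefore suffices to prove the reverse inclusion, and for this the key point is that $\{h(X) : \dim X = n\}$ spans $\H_n \otimes \Q$. First I would observe that each $h(X)$ is homogeneous of degree $\dim X$, so the subgroup of $\Z[x,y,z]$ generated by $\im(h)$ is graded, with degree-$n$ part equal to the $\Z$-span of $\{h(X) : \dim X = n\}$. By \autoref{Cor Hodge generated by varieties} this subgroup is all of $\H_*$, and comparing degree-$n$ pieces shows that $\H_n$ is generated as an abelian group by the $h(X)$ with $\dim X = n$. (Concretely, each monomial of weighted degree $n$ in the generators $A,B,C,D$ of \autoref{Thm Hodge ring presentation} becomes, after the substitutions for $A,B,C,D$ in terms of $h(\P^1),h(E),h(\P^2),h(S)$ of \autoref{Cor generated by P^1, E, P^2, and S} and the multiplicativity $h(U\times V)=h(U)h(V)$, an integer combination of Hodge polynomials of products of these varieties.) Tensoring with $\Q$ gives the spanning statement.

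Assembling the pieces: a functional $\lambda$ is a universal relation iff it vanishes on every $h(X)$ with $\dim X = n$, iff it vanishes on their $\Q$-span $\H_n \otimes \Q$, iff $\lambda$ lies in the annihilator $W$, i.e.\ iff $\lambda$ is a $\Q$-linear combination of the Serre duality relations $h^{i,j} - h^{n-i,n-j}$. This is exactly the assertion. The single substantive input is the spanning statement, and the only way it could fail is if $\im(h)$ generated a proper subgroup of $\H_n$; this is precisely what \autoref{Cor Hodge generated by varieties} rules out. Accordingly, the only step requiring care is the passage from ring generation to group generation in each fixed degree, which the grading handles cleanly, so I do not expect a genuine obstacle here.
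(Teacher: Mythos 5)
Your proposal is correct and matches the paper's (implicit) argument: the theorem is stated with only a \qedsymbol precisely because it follows formally from the fact that $\H_n$ is the $\Z$-span of the $h(X)$ with $\dim X = n$, which is what \autoref{Cor Hodge generated by varieties} (via the presentation in \autoref{Thm Hodge ring presentation} and the multiplicativity of $h$) provides. The dualisation you spell out --- identifying universal relations with the annihilator of $\H_n \otimes \Q$, which is the span of the Serre duality functionals $h^{i,j} - h^{n-i,n-j}$ --- is exactly the intended deduction.
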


That is, if $m, n \in \Z_{> 0}$ and $\lambda_{i,j} \in \Z$ for $i, j \in \{0, \ldots, n\}$ are such that
\begin{equation}\label{Eq cong}
\sum \lambda_{i,j} h^{i,j} (X) \equiv 0 \pmod m
\end{equation}
for every $X \in \Var_k$ of dimension $n$, then for all $(i,j) \neq (\tfrac{n}{2},\tfrac{n}{2})$ we have
\begin{equation}\label{Eq cong 2}
\lambda_{i,j} \equiv - \lambda_{n-i,n-j} \pmod m,
\end{equation}
and similarly if we take $\lambda_{i,j} \in \Q$ and consider equality instead of congruence mod $m$ in (\ref{Eq cong}) and (\ref{Eq cong 2}).

\begin{Rmk}
While this paper was in preparation, the results of \cite{KS} on linear relations between Hodge numbers in characteristic $0$ were improved to cover all \emph{polynomial} relations \cite{PS}. In positive characteristic, this will appear in joint work between the first author of \cite{PS} and the present author \cite{vDdBP}.
\end{Rmk}

\section{Birational invariants}\label{Sec birational}
The Hodge numbers $h^0(X,\Omega_X^i)$ are birational invariants of a smooth proper variety $X$, and Chatzistamatiou and R\"ulling proved the same for the numbers $h^i(X,\mathcal O_X)$ \cite[Thm.~1]{ChaRul}. We show that these are the only linear combinations of Hodge numbers that are birational invariants.

\begin{Def}
Let $\I \subseteq \H_*$ be the subgroup generated by differences $X - X'$ for $X, X' \in \Var_k$ birational. Note that $\I$ is a (homogeneous) ideal, for if $X \dashrightarrow X'$ is a birational map, then so is $X \times Y \dashrightarrow X' \times Y$ for any $Y \in \Var_k$. Write $\I_n$ for the degree $n$ part of $\I$.
\end{Def}

\begin{Prop}
Consider the quotient map
\[
\phi \colon \H_* \ra \Z[x,y,z]/(xy).
\]
Then
\begin{enumerate}
\item The degree $n$ part of $\im \phi$ is free of rank $2n$, with basis given by
\[
\left\{y^jz^n\ \bigg|\ 0 \leq j \leq n-1\right\} \cup \left\{ x^iz^n\ \bigg|\ 0 < i \leq n-1\right\} \cup \bigg\{ (x^n + y^n)z^n\bigg\}.
\]
\item The kernel of $\phi$ satisfies $\ker \phi = (C) = \I$.
\end{enumerate}
\end{Prop}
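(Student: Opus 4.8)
The plan is to analyze $\phi$ degree by degree, reading off both the image and the kernel directly from the Serre-duality symmetry $h^{i,j} = h^{n-i,n-j}$, and then to identify the kernel on one side with the principal ideal $(C)$ and on the other with the birational ideal $\I$.

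For (1), I would first compute the effect of $\phi$ on a general element $\big(\sum_{i,j} h^{i,j} x^i y^j\big) z^n \in \H_n$. Reduction modulo $xy$ kills every monomial $x^i y^j$ with $i,j \geq 1$, so the image is $\big(\sum_i h^{i,0} x^i + \sum_{j \geq 1} h^{0,j} y^j\big) z^n$; that is, only the coefficients along the top row ($j = 0$) and left column ($i = 0$) survive. The \emph{key observation} is to track which surviving coefficients remain linked by Serre duality: the relation $h^{i,0} = h^{n-i,n}$ pairs a surviving coefficient with that of $x^{n-i} y^n$, which itself survives only when $n - i = 0$. Thus for $n \geq 1$ the unique surviving pair is the corner relation $h^{n,0} = h^{0,n}$, whereas $h^{0,0}, h^{1,0}, \ldots, h^{n-1,0}$ and $h^{0,1}, \ldots, h^{0,n-1}$ are unconstrained. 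Exhibiting for each listed element a preimage in $\H_n$ (place a $1$ in the relevant boundary slot together with its Serre dual, zero elsewhere) shows the image is exactly the $\Z$-span of the claimed $2n$ elements, which are manifestly independent, giving freeness of rank $2n$.

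For the kernel identification in (2), an element of $\H_n$ lies in $\ker\phi$ iff all its boundary coefficients vanish, i.e. iff it has the form $\sum_{1 \leq i,j \leq n-1} h^{i,j} x^i y^j z^n$ with $h^{i,j} = h^{n-i,n-j}$. On the other hand $(C)_n = C \cdot \H_{n-2}$, and multiplication by $C = xy z^2$ reindexes a symmetric array $(g^{i,j})_{0 \le i,j \le n-2}$ in degree $n-2$ to the interior-supported symmetric array $(g^{i-1,j-1})_{1 \le i,j \le n-1}$ in degree $n$; this is a bijection onto exactly the elements just described. Hence $\ker\phi = (C)$. To then prove $\I = (C)$ I would argue the two inclusions separately. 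For $\I \subseteq \ker\phi$, note that $\phi(h(X))$ records only the edge Hodge numbers $h^{i,0}(X) = h^0(X,\Omega_X^i)$ and $h^{0,j}(X) = h^j(X,\mathcal O_X)$; the former are classical birational invariants and the latter are birational invariants by Chatzistamatiou--R\"ulling \cite{ChaRul}. Hence $\phi(h(X)) = \phi(h(X'))$ whenever $X$ and $X'$ are birational, so every generator $h(X) - h(X')$ of $\I$ lies in the ideal $\ker\phi$, giving $\I \subseteq \ker\phi$. For the reverse inclusion it suffices, as $\I$ is an ideal, to realize $C$ as a single difference: taking $X = \P^2$ and its blow-up $\Bl{p}\P^2$ at a point, the blow-up formula for Hodge numbers (valid in all characteristics) changes only $h^{1,1}$, raising it by $1$, so $h(\Bl{p}\P^2) - h(\P^2) = xy z^2 = C \in \I$. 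Therefore $(C) \subseteq \I$, and combining with the kernel computation yields $\I = (C) = \ker\phi$.

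I expect the only genuine subtlety to be the bookkeeping in the first step --- seeing precisely that Serre duality survives the reduction modulo $xy$ only at the single corner pair $h^{n,0} = h^{0,n}$, which is what produces the basis element $(x^n + y^n)z^n$ --- together with the cited input that $h^j(X,\mathcal O_X)$ is a birational invariant in positive characteristic. The remaining verifications (the reindexing for $(C)$ and the standard blow-up computation) are routine.
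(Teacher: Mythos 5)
Your argument is correct, and parts of it coincide with the paper's: statement (1) is handled the same way (the paper simply declares it obvious from the basis of $\H_n$), and the identification $\I = (C)$ uses exactly the same two inclusions, citing Chatzistamatiou--R\"ulling for $\I \subseteq \ker\phi$ and realising $C$ as $h(\Bl{\pt}\P^2) - h(\P^2)$. Where you genuinely diverge is the step $\ker\phi = (C)$. The paper deduces this from its presentation $\H_* \cong \Z[A,B,C,D]/(G)$: it computes $\H_*/(C) = \Z[A,B,D]/(D^2 - ABD)$, counts that its degree-$n$ part is free of rank $(n+1)+(n-1) = 2n$, and concludes that the surjection $\H_*/(C) \to \im\phi$ between free groups of equal rank is an isomorphism. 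You instead characterise $\ker\phi \cap \H_n$ directly as the Serre-symmetric arrays supported on the interior $1 \le i,j \le n-1$ and observe that multiplication by $C = xyz^2$ is a bijection from $\H_{n-2}$ onto exactly these. Your route is more elementary and self-contained --- it does not invoke the presentation theorem at all --- while the paper's rank-counting argument recycles machinery it has already built and sidesteps the explicit reindexing. Both are complete; the only point worth flagging is that the stated basis in (1) is correct only for $n \ge 1$ (in degree $0$ the image is all of $\Z$, not the span of $2 = x^0 + y^0$), a boundary case neither you nor the paper addresses and which is immaterial for the applications.
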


\begin{proof}
The first statement is obvious, by looking at the image of the basis
\begin{align*}
&(x^i y^j + x^{n-i}y^{n-j}) z^n,& & (i,j) \neq \left(\tfrac{n}{2},\tfrac{n}{2}\right),\\
&x^i y^j \cdot z^n,& & (i,j) = \left(\tfrac{n}{2},\tfrac{n}{2}\right)
\end{align*}
of $\H_n$. To prove the second statement, note that $C = xy \cdot z \in \ker \phi$. We have
\[
\H_*/(C) = \Z[A,B,D]/(D^2 - ABD),
\]
so a basis for $\H_*/(C)$ is given by $A^iB^j$ and $A^iB^jD$ for $i,j \in \Z_{\geq 0}$. Thus, the degree $n$ part of $\H_*/(C)$ is free of rank $(n+1) + (n-1) = 2n$. Since $\H_*/(C) \ra \im \phi$ is surjective and the degree $n$ parts of both sides are free of the same rank, we conclude that it is an isomorphism. Thus, $(C) = \ker \phi$.

The Hodge numbers $h^{i,0}(X) = h^0(X,\Omega_X^i)$ and $h^{0,j}(X) = h^j(X,\mathcal O_X)$ are birational invariants (for the latter, see \cite{ChaRul}). Since $\phi$ only remembers the $h^{p,0}$ and the $h^{0,q}$, we get $\I \subseteq \ker \phi$. Finally, note that $C \in \I$ because $C = \Bl{\pt}(\P^2) - \P^2$ (or $\P^1 \times \P^1 - \P^2$). Thus, $(C) \subseteq \I$, which finishes the proof.
\end{proof}
\vskip-\lastskip
We deduce the following theorem, which is the analogue of Theorem 2 of \cite{KS}.

\begin{Thm}
The mod $m$ reduction of an integral linear combination of Hodge numbers is a birational invariant of smooth proper varieties if and only if the linear combination is congruent mod $m$ to a linear combination of the $h^{i,0}$ and the $h^{0,j}$ (and their duals $h^{n-i,n}$ and $h^{n,n-j}$).\hfill\qedsymbol
\end{Thm}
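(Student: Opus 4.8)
The plan is to dualize the preceding Proposition. In each degree $n$, an integral linear combination $L = \sum_{i,j}\lambda_{i,j}h^{i,j}$ defines a functional $\tilde L \in \operatorname{Hom}(\H_n,\Z)$ by $\tilde L(h(X)) = L(X)$; by Serre duality $\tilde L$ is determined by the values $\lambda_{i,j}+\lambda_{n-i,n-j}$ (and $\lambda_{n/2,n/2}$ when $n$ is even) on the basis of \autoref{Def r_n}. Since the differences $h(X)-h(X')$ for birational $X,X'$ generate $\I_n$, the combination $L$ is a birational invariant modulo $m$ precisely when $\tilde L(\I_n)\subseteq m\Z$, that is, when $\tilde L \bmod m$ vanishes on $\I_n$.

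I would then feed in the Proposition: $\I_n=\ker\phi_n$ and $\H_n/\I_n\cong\im\phi_n$ is free of rank $2n$. The freeness of the quotient is the key structural input, since it forces the sequence $0\to\I_n\to\H_n\to\H_n/\I_n\to0$ to split, giving $\H_n=\I_n\oplus Q$. Decomposing $\tilde L$ accordingly, the condition $\tilde L(\I_n)\subseteq m\Z$ says exactly that $\tilde L|_{\I_n}\equiv 0 \bmod m$; extending $\tilde L|_Q$ by zero across $\I_n$ then produces a functional that is congruent to $\tilde L$ modulo $m$ and vanishes on $\ker\phi_n$ over $\Z$, hence factors through $\phi_n$. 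Conversely, any functional factoring through $\phi_n$ vanishes on $\I_n=\ker\phi_n$.

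It remains to identify the functionals factoring through $\phi_n$ with the boundary Hodge numbers. Computing $\phi_n(h(X))$ directly, only the monomials $x^iy^j$ with $ij=0$ survive, so $\phi_n(h(X))$ records $h^{0,0}(X)$, the numbers $h^{i,0}(X)$ for $0<i<n$ and $h^{0,j}(X)$ for $0<j<n$, and---because Serre duality forces $h^{n,0}=h^{0,n}$, which is exactly why the basis of $\im\phi_n$ uses $x^n+y^n$---their common top value. Reading off coordinates against the basis from the Proposition thus shows that the dual of $\im\phi_n$ is spanned precisely by the functionals extracting the $h^{i,0}$ and $h^{0,j}$ together with their Serre duals $h^{n-i,n}$ and $h^{n,n-j}$. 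This yields one direction; the converse is immediate, since each $h^{i,0}$ is a classical birational invariant and each $h^{0,j}$ is one by Chatzistamatiou--R\"ulling \cite{ChaRul}, so any integral combination of them---and anything congruent to such a combination modulo $m$---is a birational invariant modulo $m$.

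The only real obstacle is the passage from the integral to the mod $m$ statement, and it is handled entirely by the freeness of $\H_n/\I_n$: that freeness splits $\I_n$ off as a direct summand, so reducing functionals modulo $m$ commutes with the decomposition and no torsion ambiguities intervene. Everything else is the bookkeeping of reading coefficients off the explicit basis of $\im\phi_n$ supplied by the Proposition.
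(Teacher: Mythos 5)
Your argument is correct and is exactly the deduction the paper intends: the theorem is stated with a \qedsymbol because it follows from the preceding Proposition ($\I_n=\ker\phi_n$ with $\H_n/\I_n\cong\im\phi_n$ free of rank $2n$) by precisely the dualization you carry out. You have simply made explicit the splitting $\H_n=\I_n\oplus Q$ and the identification of the dual basis of $\im\phi_n$ with the outer Hodge numbers, which the paper leaves to the reader.
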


\begin{Thm}\label{Thm linear combinations birational invariant}
A rational linear combination of Hodge numbers is a birational invariant of smooth proper varieties if and only if is is a linear combination of the $h^{i,0}$ and the $h^{0,j}$ (and their duals $h^{n-i,n}$ and $h^{n,n-j}$).\hfill\qedsymbol
\end{Thm}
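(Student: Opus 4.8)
The plan is to dualize the preceding Proposition. Fix the dimension $n$, so that a rational linear combination $\ell = \sum_{i,j=0}^n \lambda_{i,j}h^{i,j}$ defines, on $n$-dimensional varieties, a linear functional $L \colon \H_n \otimes \Q \to \Q$: indeed $\ell(X) = \sum_{i,j}\lambda_{i,j}h^{i,j}(X)$ depends only on the Hodge polynomial $h(X) \in \H_n$, because the relations $h^{i,j} = h^{n-i,n-j}$ that are built into $\H_n$ are exactly Serre duality. By definition $\ell$ is a birational invariant if and only if $L$ kills every difference $h(X) - h(X')$ with $X \dashrightarrow X'$ birational of dimension $n$, that is, if and only if $L$ vanishes on the degree $n$ part $\I_n$ of the birational ideal.

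First I would invoke the preceding Proposition, which identifies $\I_n$ with $\ker(\phi)_n$. Thus $L$ is a birational invariant precisely when it vanishes on $\ker(\phi)_n \otimes \Q$, i.e.\ when it factors as $L = \bar L \circ \phi$ for some functional $\bar L$ on $\im(\phi)_n \otimes \Q$. It then remains to read off which functionals on $\H_n$ arise this way, and here I would use the explicit description of $\phi$ on the standard basis of $\H_n$: applied to $P = \sum_{i,j} c_{i,j}x^iy^jz^n$, the image $\phi(P)$ retains only the ``edge'' coefficients $c_{i,0}$ and $c_{0,j}$, with the monomials $x^n$ and $y^n$ folded together into the single basis vector $(x^n+y^n)z^n$ by the Serre-duality identity $c_{n,0} = c_{0,n}$. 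Since $c_{i,0} = h^{i,0}(P)$ and $c_{0,j} = h^{0,j}(P)$, this says exactly that $\bar L \circ \phi$, hence $L$, is a $\Q$-linear combination of the functionals $h^{i,0}$ and $h^{0,j}$ --- equivalently, using Serre duality, of these together with their duals $h^{n-i,n}$ and $h^{n,n-j}$.

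For the converse I would recall that each $h^{i,0} = h^0(X,\Omega_X^i)$ is a classical birational invariant and each $h^{0,j} = h^j(X,\mathcal O_X)$ is one by Chatzistamatiou and R\"ulling, so any linear combination of them is a birational invariant; this inclusion is already contained in the proof of the preceding Proposition (the part showing $\I \subseteq \ker \phi$). The two directions together give the stated equivalence. I do not expect a genuine obstacle here, since the preceding Proposition does all the work; the only point requiring care is the bookkeeping with Serre duality, namely that $h^{n,0}$ and $h^{0,n}$ coincide as a single functional on $\H_n$ (matching the lone basis vector $(x^n+y^n)z^n$ of $\im\phi$), so that the claimed span has the expected rank $2n = \rk \im(\phi)_n$ rather than $2n+1$.
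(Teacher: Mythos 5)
Your proposal is correct and follows exactly the route the paper intends: the theorem is stated with an immediate \qedsymbol because it is read off from the preceding Proposition, namely that $\I_n = \ker(\phi)_n$, so a rational functional on $\H_n$ is a birational invariant precisely when it factors through $\im(\phi)_n$, whose dual is spanned by the edge functionals $h^{i,0}$ and $h^{0,j}$ (with $h^{n,0}=h^{0,n}$ identified, giving rank $2n$). Your bookkeeping of the basis of $\im(\phi)_n$ and the converse via Chatzistamatiou--R\"ulling matches the paper's argument.
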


\section{The de Rham ring}\label{Sec de Rham}
In analogy with the Hodge ring, we define a de Rham ring $\DR_*$ whose elements correspond to formal de Rham polynomials of varieties.

\begin{Def}
Consider $\Z[t,z]$ as a graded ring where $t$ has degree $0$ and $z$ has degree $1$. The \emph{de Rham ring} of varieties over $k$ is the graded subring $\DR_* \subseteq \Z[t,z]$ whose degree $n$ part is
\[
\DR_n = \left\{\left(\ \sum_{i=0}^{2n}h^it^i\right)z^n\ \Bigg|\ \begin{array}{c}h^i = h^{2n-i} \text{ for all } i,\\ h^n \text{ is even if } n \text{ is odd}.\end{array}\right\}.
\]
This differs from the notation of \cite{KS}, where $\DR_*$ denotes the ring where moreover $h^i$ is even for every odd degree $i$.
\end{Def}

\begin{Def}
Write  $\dr \colon \Var_k \to \DR_*$ for the map sending an $n$-dimensional variety $X$ to its \emph{(formal) de Rham polynomial}
\[
\dr(X) = \left(\ \sum_{i=0}^{2n}\hdR^i(X) t^i\right)z^n.
\]
Note that the cup product defines a perfect pairing \cite[Thm.\ VII.2.1.3]{Berthelot}
\[
\HdR^i(X) \times \HdR^{2n-i}(X) \to k,
\]
showing that $\hdR^i(X) = \hdR^{2n-i}(X)$. The pairing on $\HdR^n(X)$ is alternating if $n$ is odd, so in that case $\hdR^n(X)$ is even, showing that $\dr(X)$ lands in $\DR_*$. The K\"unneth formula \cite[Cor.\ V.4.2.3]{Berthelot} shows that $\dr(X \times Y) = \dr(X)\cdot\dr(Y)$ for $X, Y \in \Var_k$.
\end{Def}

\begin{Rmk}\label{Rmk Hodge to de Rham}
There is a natural map $s \colon \H_* \to \DR_*$ given by $x, y \mapsto t$ and $z \mapsto z$. The triangle
\begin{equation*}
\begin{tikzcd}[column sep=.1em]
& \Var_k \ar[start anchor={[xshift=-.1em]}]{ld}[swap]{h}\ar[start anchor={[xshift=.1em]}]{rd}{\dr} & \\
\H_* \ar{rr}[swap]{s} & & \DR_*\!\!\!\!\!
\end{tikzcd}
\end{equation*}
does not commute, because $s(h(X)) = \dr(X)$ if and only if the Hodge--de Rham spectral sequence of $X$ degenerates.

However, by \autoref{Cor Hodge generated by varieties} the Hodge ring $\H_*$ can be generated by varieties for which the Hodge--de Rham spectral sequence degenerates. We can use the presentation of $\H_*$ from \autoref{Thm Hodge ring presentation} to get a compatible presentation of $\DR_*$. In particular, $\DR_*$ is generated by the image of $\dr$; see \autoref{Cor de Rham generated by varieties}.
\end{Rmk}

\begin{Thm}\label{Thm de Rham ring presentation}
Consider the graded ring $\Z[A,B,C,D]$ where $A$ and $B$ have degree $1$, and $C$ and $D$ have degree $2$. Then the map
\[
\psi \colon \Z[A,B,C,D] \to \DR_*
\]
given by
\begin{align*}
\phi(A) &= (1+t^2)z, & \phi(C) &= t^2 \cdot z^2,\\
\phi(B) &= 2t \cdot z,  & \phi(D) &= (t+t^3)z^2
\end{align*}
is a surjection of graded rings with $\psi = s \circ \phi$. The kernel of $\psi$ is given by
\[
J = (A^2C-D^2, AB-2D, B^2-4C, BD-2AC).
\]
\end{Thm}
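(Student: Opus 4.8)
The plan is to mirror the structure of the proof of \autoref{Thm Hodge ring presentation}, exploiting the map $s \colon \H_* \to \DR_*$ of \autoref{Rmk Hodge to de Rham} to transport everything I already know about the Hodge ring. First I would verify the easy algebraic facts: that $\psi$ is a well-defined graded homomorphism satisfying $\psi = s \circ \phi$ (this is immediate from the formulas, since applying $x,y \mapsto t$ to $\phi(A) = (1+xy)z$, $\phi(B) = (x+y)z$, $\phi(C) = xyz^2$, $\phi(D) = (x+xy^2)z^2$ produces exactly $(1+t^2)z$, $2tz$, $t^2z^2$, $(t+t^3)z^2$), and that each of the four listed generators of $J$ lies in $\ker\psi$. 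Checking $A^2C - D^2 \mapsto 0$, $AB - 2D \mapsto 0$, $B^2 - 4C \mapsto 0$, $BD - 2AC \mapsto 0$ is a short direct substitution. Surjectivity of $\psi$ follows from surjectivity of $\phi$ (\autoref{Thm Hodge ring presentation}) together with surjectivity of $s$, which holds because $s$ sends the explicit generators $A,B,C,D$ of $\H_*$ to the corresponding generators of $\DR_*$.

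The substance is proving $\ker\psi = J$. One inclusion is done above; for the reverse I would pass to $R' = \Z[A,B,C,D]/J$ and show the induced map $\bar\psi \colon R' \to \DR_*$ is injective. The cleanest route is again a rank count degree by degree, combined with \autoref{Lem free of the same rank}: reduce modulo each prime $\ell$ and check injectivity of $\bar\psi \otimes \F_\ell$, which by a Krull-dimension argument reduces to showing the image is a domain of the right dimension. Here I expect $\im(\psi)$ to have Krull dimension $2$ (rather than $3$ as for $\H_*$), reflecting that the single variable $t$ collapses the two-variable Hodge picture; indeed $\psi_\ell(A)$ and $\psi_\ell(C)$ should already be algebraically independent in $\F_\ell[t,z]$, while the relation $B^2 = 4C$ shows $B$ is integral over $\Z[A,C]$ and $D = AB/2$ ties $D$ down as well. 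So the plan is: (i) determine that $\DR_n$ is free of an explicit rank, say $\rho_n$; (ii) show $(R')_n$ is free of the same rank $\rho_n$ by producing a monomial basis for $\F_\ell[A,B,C,D]/J$; and (iii) conclude via the dimension/domain argument that $\bar\psi_\ell$ is injective for every $\ell$.

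The main obstacle will be step (ii): the ideal $J$ has four generators and is not a complete intersection, so extracting a clean additive basis for the quotient $R'$ requires understanding the relations among the generators carefully. The relations $B^2 - 4C$ and $AB - 2D$ let me eliminate high powers of $B$ and rewrite any monomial divisible by $B^2$ or by $BD$, suggesting that a spanning set consists of monomials $A^iC^k$ and $A^iC^kB$ together with the $D$-monomials $A^iC^kD$, but the remaining relation $A^2C - D^2$ reintroduces $D^2$ and must be used to cap the $D$-degree at $1$, while $BD - 2AC$ controls the mixed $BD$ term. The delicate part is confirming that after imposing all four relations the surviving monomials are exactly independent and match $\rho_n$; over $\Z$ one must also watch the factors of $2$ (the relations $AB - 2D$ and $B^2 - 4C$ carry even coefficients), which is precisely why working prime-by-prime via \autoref{Lem free of the same rank}, and separately handling $\ell = 2$, is the safe way to organize the argument. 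Once the matching rank $\rho_n = (R')_n$ is established, injectivity and hence $\ker\psi = J$ follow formally.
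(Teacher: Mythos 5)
Your skeleton matches the paper up to the last step: surjectivity via $\psi = s\circ\phi$ with $s$ surjective on the explicit bases, the direct check that the four generators of $J$ lie in $\ker\psi$, and the reduction to showing that $R' = \Z[A,B,C,D]/J \to \DR_*$ is injective by a degree-by-degree count. Your monomial reduction in step (ii) is essentially the computation the paper performs: using $B^2-4C$, $AB-2D$, $BD-2AC$ and $A^2C-D^2$ to whittle the monomials in $R'_n$ down to a list of $n+1$ elements. The problem is your step (iii). The Krull-dimension/domain argument that worked for the Hodge ring does not transfer here, and the failure is exactly at $\ell = 2$, which you flag but do not resolve. Concretely: $R'\otimes\F_2 = \F_2[A,B,C,D]/J$ is not a domain, since $B^2 = 4C \equiv 0$ while $B \neq 0$ in degree $1$; worse, $\psi\otimes\F_2$ kills $B$ (as $\psi(B) = 2tz$), and $\DR_n\otimes\F_2$ does not even inject into $\F_2[t,z]_n$ because the parity condition makes $\Z[t,z]_n/\DR_n$ have $2$-torsion (the class of $2t^nz^n$ for $n$ odd). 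So neither the ``source is a domain of dimension $d$'' half nor the ``image sits inside a polynomial ring'' half of the Hodge-ring argument survives at $\ell=2$, and \autoref{Lem free of the same rank} cannot be applied the way you intend.

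The repair is simpler than what you propose, and it is what the paper does: you do not need to prove that your $n+1$ monomials are \emph{independent} in $R'_n$ (that is the hard direction), only that they \emph{span}. Once $R'_n$ is generated by at most $n+1$ elements and surjects onto $\DR_n$, which is free of rank exactly $n+1$ (basis $(t^i+t^{2n-i})z^n$ for $i<n$ together with $t^nz^n$ or $2t^nz^n$ according to parity), the surjection is automatically an isomorphism of abelian groups --- a surjection from a group with $\leq n+1$ generators onto $\Z^{n+1}$ has trivial kernel. This eliminates the prime-by-prime analysis entirely and with it the $\ell=2$ obstruction. Your spanning-set bookkeeping (cap the $B$-degree at $1$ via $B^2-4C$, then use $AB-2D$, $BD-2AC$, and $A^2C-D^2$ to reach the list $A^iD^\ell$, $C^kD^\ell$, $AC^kD^\ell$, $BC^k$) is the right computation; you just need to count that this list has exactly $n+1$ members in degree $n$ and then invoke the counting argument above instead of step (iii).
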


\begin{proof}
Note that $\DR_n$ is free of rank $n+1$, with basis given by
\begin{align*}
&(t^i+t^{2n-i})z^n & & 0 \leq i < n,\\
&t^nz^n & & n \equiv 0 \pmod{2},\\
&2t^nz^n & & n \equiv 1 \pmod{2}.
\end{align*}
Computing the image of $s$ on the basis of $\H_n$ of \autoref{Def r_n}, we see that $s$ is surjective. Since $\phi$ is surjective by \autoref{Thm Hodge ring presentation}, we conclude that $\psi = s \circ \phi$ is surjective. On the other hand, one checks that $J \subseteq \ker \psi$, so we get a surjection
\[
R = \Z[A,B,C,D]/J \twoheadrightarrow \DR_*.
\]
It suffices to show that the degree $n$ part of $R$ is generated by $n+1$ elements. It is generated by $A^iB^jC^kD^\ell$ for $i+j+2k+2\ell = n$. By the relation $B^2 - 4C$ we may assume $j \leq 1$. The relation $AB - 2D$ then shows that we need only consider the monomials $A^iC^kD^\ell$ and $BC^kD^\ell$. Then $BD - 2AC$ allows us to restrict to $A^iC^kD^\ell$ and $BC^k$. Finally, the relation $A^2C-D^2$ shows that $R_n$ is generated by
\begin{align*}
A^iD^\ell, & & C^kD^\ell\ \ (k > 0), & & AC^kD^\ell\ \ (k > 0), & & BC^k.
\end{align*}
If $n$ is even, there are $\tfrac{n}{2}+1$, $\tfrac{n}{2}$, $0$, and $0$ monomials of these types in $R_n$ respectively, and if $n$ is odd there are $\tfrac{n+1}{2}$, $0$, $\tfrac{n-1}{2}$, and $1$ monomials of these types in $R_n$. In both cases, they add up to $n+1$ elements generating $R_n$.
\end{proof}

\begin{Cor}\label{Cor de Rham generated by varieties}
The de Rham ring of varieties over $k$ is generated by smooth projective varieties that are defined over $\F_p$, admit a lift to $\Z_p$, and for which the Hodge--de Rham spectral sequence degenerates.
\end{Cor}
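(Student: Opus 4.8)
The plan is to deduce this purely formally from the corresponding statement for the Hodge ring, \autoref{Cor Hodge generated by varieties}, by transporting it along the surjection $s \colon \H_* \to \DR_*$ of \autoref{Rmk Hodge to de Rham}. The essential input is the compatibility $s(h(X)) = \dr(X)$, which holds precisely when the Hodge--de Rham spectral sequence of $X$ degenerates --- and this is exactly the class of varieties produced by \autoref{Cor Hodge generated by varieties}.

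First I would recall from \autoref{Cor Hodge generated by varieties} (via \autoref{Cor generated by P^1, E, P^2, and S}) that $\H_*$ is generated as a ring by the Hodge polynomials $h(\P^1)$, $h(E)$, $h(\P^2)$, and $h(S)$, where $E$ is an elliptic curve over $\F_p$ and $S$ is the surface of \autoref{Prop example Hodge symmetry}. Each of these four varieties is smooth projective, is defined over $\F_p$, lifts to $\Z_p$, and has a degenerating Hodge--de Rham spectral sequence: for $S$ this is \autoref{Rmk Hodge-de Rham lift}, while the projective spaces and the elliptic curve lift to $\Z_p$ and degenerate for standard reasons.

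Next, since $s$ is a surjective homomorphism of graded rings by \autoref{Thm de Rham ring presentation}, the image $s(\H_*) = \DR_*$ is generated by the images under $s$ of any generating set of $\H_*$. Applying $s$ to the four generators above and invoking the equality $s(h(X)) = \dr(X)$ of \autoref{Rmk Hodge to de Rham} --- valid because each of these varieties has a degenerating Hodge--de Rham spectral sequence --- shows that $\DR_*$ is generated by $\dr(\P^1)$, $\dr(E)$, $\dr(\P^2)$, and $\dr(S)$. These are the de Rham polynomials of smooth projective varieties defined over $\F_p$ that lift to $\Z_p$ and have degenerating Hodge--de Rham spectral sequence, as required.

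I do not expect a genuine obstacle here: the argument is a formal diagram chase combining \autoref{Thm de Rham ring presentation}, \autoref{Cor Hodge generated by varieties}, and \autoref{Rmk Hodge to de Rham}. The only point requiring care is to choose a generating set of $\H_*$ consisting of varieties for which the Hodge--de Rham spectral sequence degenerates, so that the (generally non-commuting) triangle of \autoref{Rmk Hodge to de Rham} does commute on the chosen generators and $s \circ h$ may legitimately be replaced by $\dr$ there.
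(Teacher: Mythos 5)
Your proposal is correct and is essentially the paper's own argument: the paper deduces the corollary from \autoref{Cor Hodge generated by varieties} together with the surjectivity of $s \colon \H_* \to \DR_*$, and you have simply spelled out the key point (that $s \circ h = \dr$ on the chosen generators because their Hodge--de Rham spectral sequences degenerate) in more detail.
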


\begin{proof}
This follows from the same statement in \autoref{Cor Hodge generated by varieties} and the surjectivity of $s \colon \H_* \to \DR_*$.
\end{proof}

\section{The Hodge--de Rham ring}\label{Sec Hodge-de Rham}
Because the triangle in \autoref{Rmk Hodge to de Rham} does not commute, the last thing to compute is the image of the diagonal map
\begin{align*}
\Var_k &\to \H_* \times \DR_*\\
X &\mapsto (h(X),\dr(X)).
\end{align*}
Once again, we will define a subring containing the image, and construct enough varieties that generate this subring.

\begin{Def}
Define the ring homomorphisms
\begin{align*}
\chi \colon \H_* &\to \Z[z] & h^{0,0} \colon \H_* &\to \Z[z] & \chi \colon \DR_* &\to \Z[z] & h^0 \colon \DR_* &\to \Z[z]\\
x,y &\mapsto -1, & x,y &\mapsto 0, & t &\mapsto -1, & t &\mapsto 0.
\end{align*}
Then the \emph{Hodge--de Rham ring of varieties over $k$} is the subring
\[
\HDR_* = \left\{\big(a,b\big) \in \H_* \times \DR_* \ \Bigg|\ \begin{array}{c}h^{0,0}(a) = h^0(b),\\\chi(a) = \chi(b).\end{array}\right\}.
\]
Note that $(h(X),\dr(X)) \in \HDR_n$ if $X$ is a smooth and proper variety over $k$ of dimension $n$. Indeed, the Euler characteristic in any spectral sequence is constant between the pages, so $\chi(h(X)) = \chi(\dr(X))$. Moreover, $h^{0,0}(X)$ and $h^0(X)$ agree since they both equal the number of geometric components of $X$.
\end{Def}

\begin{Lemma}\label{Lem kernel of chi times h^0}
The kernel of the morphism $(\chi,h^0) \colon \DR_* \to \Z[z] \times \Z[z]$ given by $t \mapsto (-1,0)$ is generated by $(t+2t^2+t^3)z^2$ and $(t^2+2t^3+t^4)z^3$.
\end{Lemma}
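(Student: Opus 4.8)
The plan is to reinterpret the two maps concretely, reduce the statement to a divisibility fact about reciprocal polynomials, and then match the kernel against the ideal degree by degree.

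First I would note that for $f(t)z^n \in \DR_n$ one has $\chi(f(t)z^n) = f(-1)z^n$ and $h^0(f(t)z^n) = f(0)z^n$, so that the degree-$n$ part of the kernel is
\[
K_n = \big\{\, f(t)z^n \in \DR_n \;:\; f(0) = f(-1) = 0 \,\big\}.
\]
Since $(\chi,h^0)$ is a ring homomorphism its kernel $K$ is an ideal, and a direct check shows that $u = (t+2t^2+t^3)z^2 = t(1+t)^2z^2$ and $v = (t^2+2t^3+t^4)z^3 = t^2(1+t)^2z^3$ both lie in $K$; hence $(u,v)\subseteq K$. It remains to prove $K \subseteq (u,v)$, which I would establish in each degree separately.

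The key structural input is that a reciprocal (\enquote{palindromic}) polynomial $f = \sum_{i=0}^{2n} h_i t^i$ with $h_i = h_{2n-i}$ has \emph{even} vanishing order at $t=-1$. I would prove this by writing $f(t) = t^n Q(t+t^{-1})$ for a polynomial $Q$ of degree $n$ and using $t+t^{-1}+2 = (1+t)^2/t$: a root of $Q$ at $-2$ of multiplicity $m$ contributes exactly the factor $(1+t)^{2m}$ to $f$. Consequently, for $f z^n \in K_n$ the condition $f(0)=0$ forces $t\mid f$ (and $h_{2n}=0$ by symmetry), while $f(-1)=0$ forces $(1+t)^2\mid f$, so $t(1+t)^2\mid f$. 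The same computation shows the parity condition \enquote{$h_n$ even when $n$ is odd} built into $\DR_n$ is automatic once $f(-1)=0$. A short reciprocity check (namely $t(1+t)^2 s$ is reciprocal of degree $\le 2n$ if and only if $s$ is reciprocal of degree $\le 2n-4$) then identifies
\[
K_n = \big\{\, t(1+t)^2\,s(t)\,z^n \;:\; s \text{ reciprocal},\ \deg s \le 2n-4 \,\big\}.
\]
On the ideal side, an element of $(u,v)_n$ equals $p z^{n-2}\cdot u + q z^{n-3}\cdot v = t(1+t)^2\big(p(t)+tq(t)\big)z^n$ with $pz^{n-2}\in\DR_{n-2}$ and $qz^{n-3}\in\DR_{n-3}$, and both $p$ and $tq$ are reciprocal of degree $\le 2n-4$. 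Thus everything collapses to showing that $(p,q)\mapsto p+tq$ surjects onto all reciprocal $s$ of degree $\le 2n-4$, where $p$ is reciprocal of degree $\le 2n-4$ with middle coefficient even when $n$ is odd, and $q$ is reciprocal of degree $\le 2n-6$ (with no parity constraint when $n$ is odd). When $n$ is even there is no constraint on $p$, so $q=0$ already works; when $n$ is odd, the only obstruction is the parity of the middle coefficient of $s$, and taking $q = s_{n-2}\,t^{n-3}$ makes $tq$ cancel it, leaving $p = s - s_{n-2}t^{n-2}$ with zero middle coefficient. This yields $K_n = (u,v)_n$ for all $n$, hence $K=(u,v)$.

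The step I expect to be the main obstacle is the even-order vanishing at $t=-1$: this is exactly what upgrades the naive $t(1+t)\mid f$ to $t(1+t)^2\mid f$, and it is the reason the generators carry a squared factor $(1+t)^2$ rather than a single $(1+t)$. The only remaining subtlety is the parity bookkeeping in the surjectivity step, where the generator $v$ (needed only in odd degrees) precisely compensates for the evenness constraint in the definition of $\DR_*$.
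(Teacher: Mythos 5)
Your proof is correct, but it takes a genuinely different route from the paper's. The paper deduces the lemma from the presentation $\Z[A,B,C,D]/J \cong \DR_*$ of \autoref{Thm de Rham ring presentation}: it identifies the two proposed generators with $D+2C$ and $AC+BC$, reduces the quotient to $\Z[E,B,C]/(B^2-4C,EB,EC)$ with $E=A+B$, and checks that each graded piece of this quotient needs no more generators than the rank of $\im(\chi,h^0)$, so the surjection $\DR_*/I \twoheadrightarrow \im(\chi,h^0)$ must be an isomorphism. You instead work directly inside $\Z[t,z]$: the key input is that a reciprocal polynomial $f=t^nQ(t+t^{-1})$ vanishes to \emph{even} order at $t=-1$, so the conditions $f(0)=f(-1)=0$ upgrade to $t(1+t)^2\mid f$ (divisibility over $\Z$ since $(1+t)^2$ is monic), and the observation that $f(-1)=0$ already forces the middle coefficient to be even identifies the kernel in degree $n$ as $t(1+t)^2$ times \emph{all} integral reciprocal polynomials centred at $n-2$; the explicit solution of $s=p+tq$, with $q$ a middle monomial absorbing the parity obstruction in odd degrees, then matches this against the ideal. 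Both arguments are complete. Yours is self-contained and more explicit (it does not invoke \autoref{Thm de Rham ring presentation} and describes the kernel concretely, clarifying why the generators carry $(1+t)^2$ rather than $(1+t)$); the paper's is shorter given that the presentation is already established, at the price of an indirect generator-versus-rank count.
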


\begin{proof}
Clearly $I = ((t+2t^2+t^3)z^2,(t^2+2t^3+t^4)z^3)$ is contained in $\ker(\chi,h^0)$, so we get a quotient map
\[
R = \DR_*/I \to \Z[z] \times \Z[z].
\]
The image is generated in degree $n$ by $(1,1)$ if $n = 0$, by $(2z^n,0)$ and $(0,z^n)$ if $n$ is odd, and by $(z^n,0)$ and $(0,z^n)$ if $n > 0$ is even. Thus, it suffices to show that $R_n$ can be generated by $1$ element if $n = 0$ and by $2$ elements if $n > 0$.

Under the presentation $\Z[A,B,C,D]/J \cong \DR_*$ of \autoref{Thm de Rham ring presentation}, the elements $D+2C$ and $AC+BC$ map to the generators of $I$, so
\[
R = \Z[A,B,C,D]/(J+(D+2C,AC+BC)).
\]
Pass to the quotient $\Z[A,B,C,D]/(D+2C) \cong \Z[A,B,C]$, where the image of $J+(D+2C,AC+BC)$ is generated by $B^2-4C$, $AB+4C$, and $AC+BC$. Setting $E = A + B$ (corresponding to the class of an elliptic curve), we finally have to compute
\[
R \cong \Z[E,B,C]/(B^2-4C,EB,EC).
\]
Then $R_n$ is generated by $E^iB^jC^k$ for $i+j+2k=n$. Using $B^2-4C$ we reduce to $j \leq 1$. The relation $EB$ shows that for $j = 1$ we may take $i = 0$. By the relation $EC$ we may assume $i = 0$ if $k > 0$. Thus, $R_n$ is generated by
\begin{align*}
E^i, & & C^k\ \ (k > 0), & & BC^k,
\end{align*}
sitting in degree $i$, $2k$, and $2k + 1$ respectively. We see that $R_n$ is generated by $1$ element if $n = 0$ and by $2$ elements if $n > 0$.
\end{proof}

\begin{Rmk}\label{Rmk generated by a threefold}
The ideal generated by $(t+2t^2+t^3)z^2$ contains $(2t^2+4t^3+2t^4)z^3$ and $(t+2t^2+2t^3+2t^4+t^5)z^3$, so we may replace the second generator of $I$ as in \autoref{Lem kernel of chi times h^0} by any element in $I_3$ for which $h^2$ is odd.
\end{Rmk}

\begin{Thm}\label{Thm Hodge-de Rham ring presentation}
Let $\Z[A,B,C,D]$ as in \autoref{Thm Hodge ring presentation} and \autoref{Thm de Rham ring presentation}. Let $S$ be a surface for which $h^{1,0}(S)+h^{0,1}(S) - \hdR^1(S)$ is odd, and let $T$ be a threefold for which $h^{2,0}(T) + h^{1,1}(T) + h^{0,2}(T) - \hdR^2(T)$ is odd. Define the map
\[
\tau \colon \Z[A,B,C,D][S,T] \to \HDR_* \subseteq \H_* \times \DR_*
\]
on $\Z[A,B,C,D]$ by $(\phi,\psi)$, and by
\begin{align*}
\tau(S) = (h(S),\dr(S)), & & \tau(T) = (h(T),\dr(T)).
\end{align*}
Then $\tau$ is surjective. In particular, $\HDR_*$ is generated by varieties.
\end{Thm}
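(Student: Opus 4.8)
The plan is to exploit the identity $\psi = s \circ \phi$ from \autoref{Thm de Rham ring presentation}: the restriction of $\tau$ to $\Z[A,B,C,D]$ already surjects onto the ``graph'' of $s$, and $S$, $T$ are then used to fill in the off-diagonal directions. Since $\chi \circ s = \chi$ and $h^0 \circ s = h^{0,0}$ on $\H_*$, every pair $(a,s(a))$ with $a \in \H_*$ lies in $\HDR_*$; write $\Delta$ for the subgroup of such pairs. As $\phi$ is surjective by \autoref{Thm Hodge ring presentation}, the image of $\Z[A,B,C,D]$ under $\tau$ is exactly $\Delta$. For any $(a,b) \in \HDR_*$ the element $b - s(a)$ satisfies $\chi(b-s(a)) = \chi(b)-\chi(a) = 0$ and $h^0(b-s(a)) = h^0(b) - h^{0,0}(a) = 0$, so $b - s(a) \in K := \ker(\chi,h^0) \subseteq \DR_*$, and $(a,b) = (a,s(a)) + (0,b-s(a))$. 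This gives a direct sum decomposition $\HDR_* = \Delta \oplus (0,K)$, so $\tau$ is surjective if and only if $(0,K) \subseteq \im \tau$.

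I would first identify the off-diagonal part of $\im \tau$. Writing $\sigma(S) = \dr(S) - s(h(S))$ and $\sigma(T) = \dr(T) - s(h(T))$, the failure of degeneration gives $(0,\sigma(S)) = \tau(S) - (h(S),s(h(S)))$ and $(0,\sigma(T)) = \tau(T) - (h(T),s(h(T)))$, both in $\im\tau$ since the subtracted terms lie in $\Delta$. Because $s$ is surjective onto $\DR_*$, the action $(a,s(a))\cdot(0,c) = (0,s(a)c)$ of $\Delta$ on the off-diagonal summand realizes multiplication by every element of $\DR_*$; hence the off-diagonal part of $\im\tau$ is precisely the $\DR_*$-ideal generated by $\sigma(S)$ and $\sigma(T)$. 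The problem thus reduces to proving $(\sigma(S),\sigma(T)) = K$, and by \autoref{Lem kernel of chi times h^0} it suffices to place the two generators $g_2 := (t+2t^2+t^3)z^2$ and $g_3 := (t^2+2t^3+t^4)z^3$ of $K$ into this ideal.

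Next I would compute $\sigma(S)$ and $\sigma(T)$ explicitly. Using Serre duality on the Hodge side, Poincar\'e duality on the de Rham side, and the equality of Euler characteristics, the surface contribution collapses to $\sigma(S) = \bigl(\hdR^1(S) - h^{1,0}(S) - h^{0,1}(S)\bigr)\,g_2$, the coefficient of $t^2$ being forced to twice that of $t$ by the Euler identity. The hypothesis on $S$ (and, for the explicit surface of \autoref{Prop example Hodge-de Rham degeneration}, the value $-1$) makes this coefficient a unit, so $g_2$ lies in the ideal. An analogous computation in degree $3$ shows $\sigma(T) \in K_3$, with the coefficient of $t^2$ equal to $\hdR^2(T) - h^{2,0}(T) - h^{1,1}(T) - h^{0,2}(T)$, which the hypothesis on $T$ makes odd. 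By \autoref{Rmk generated by a threefold} the quotient of $K_3$ by the degree-$3$ part of the ideal $(g_2)$ is cyclic of order $2$, detected by the parity of this coefficient; since $\sigma(T)$ and $g_3$ have the same nontrivial class there, their difference lies in $(g_2)$, and hence $g_3 \in (\sigma(S),\sigma(T))$ as well.

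Combining the two computations gives $(g_2,g_3) \subseteq (\sigma(S),\sigma(T))$, while the reverse inclusion is immediate, so $(\sigma(S),\sigma(T)) = K$ and $\tau$ is surjective. The main obstacle is integral rather than rational: over $\Q$ the ideal would automatically be generated in the expected degrees, but pinning down the \emph{exact} lattice generators $g_2$ and $g_3$ requires both Euler-characteristic identities (to force $\sigma(S)$ onto a multiple of $g_2$ and to constrain the degree-$3$ shape of $\sigma(T)$) together with the $\Z/2$-computation of \autoref{Rmk generated by a threefold}. This is exactly where the parity and unit hypotheses on $S$ and $T$ enter, and where a naive argument would recover the ideal only after inverting $2$.
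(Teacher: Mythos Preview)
Your argument is essentially the paper's own: reduce via the diagonal $\Delta = \{(a,s(a))\}$ to showing $(0,K) \subseteq \im\tau$, identify the off-diagonal part of $\im\tau$ as the $\DR_*$-ideal generated by $\sigma(S) = \dr(S)-s(h(S))$ and $\sigma(T)$, and then match these against the generators of $K$ from \autoref{Lem kernel of chi times h^0} and \autoref{Rmk generated by a threefold}. The computation $\sigma(S) = (\hdR^1(S)-h^{1,0}(S)-h^{0,1}(S))\,g_2$ via Serre, Poincar\'e, and the Euler-characteristic identity is exactly what the paper does (with the opposite sign convention), and your treatment of $T$ via the parity criterion of \autoref{Rmk generated by a threefold} is the same.

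One point deserves care: you assert that the hypothesis ``$h^{1,0}(S)+h^{0,1}(S)-\hdR^1(S)$ is odd'' makes the coefficient of $g_2$ a \emph{unit}, but an odd integer need not be $\pm 1$. Since $K_2 = \Z\, g_2$ and the degree-$2$ part of the ideal $(\sigma(S),\sigma(T))$ is exactly $\Z\cdot\sigma(S)$ (the threefold $T$ contributes nothing in degree $2$), surjectivity there genuinely requires the coefficient to be $\pm 1$, not merely odd. The paper's proof makes the same leap (writing ``$b^1 = 1$ by assumption''), so this is an imprecision in the theorem statement rather than a flaw in your strategy; for the explicit surface of \autoref{Prop example Hodge-de Rham degeneration} the coefficient is $-1$ and everything goes through. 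By contrast, for $T$ the hypothesis ``odd'' really is the correct one, precisely because of the $\Z/2$ structure you invoke from \autoref{Rmk generated by a threefold}.
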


\begin{Ex}\label{Ex S and T}
For $S$ we may take the surface of \autoref{Prop example Hodge-de Rham degeneration}. For $T$, we may take a sufficiently high degree smooth hypersurface in $S \times S$. Indeed, a K\"unneth computation shows that for $S \times S$ the difference $h^{2,0}+h^{1,1}+h^{0,2}-\hdR^2$ is odd. In characteristic $0$, Nakano vanishing implies weak Lefschetz for algebraic de Rham cohomology. In arbitrary characteristic, replacing Nakano vanishing by Serre vanishing shows that for sufficiently high degree hypersurfaces $T \subseteq S \times S$, the Hodge and de Rham numbers in degree $i < \dim T$ agree with those of $S \times S$.

See for example \cite{vDdBP} for a proof of weak Lefschetz for algebraic de Rham cohomology using Nakano vanishing or Serre vanishing.
\end{Ex}

\begin{proof}[Proof of \autoref{Thm Hodge-de Rham ring presentation}]
By \autoref{Cor Hodge generated by varieties} the Hodge ring $\H_*$ is generated by varieties for which the Hodge--de Rham spectral sequence degenerates. Thus, given $(a,b) \in \HDR_*$, we know that $(a,s(a)) \in \im(\tau)$. We reduce to the case $a = 0$, hence $b \in \ker(\chi,h^0)$, which is the ideal $I$ of \autoref{Lem kernel of chi times h^0}. Again using that $(a,s(a)) \in \im(\tau)$, and by surjectivity of $s$ (\autoref{Thm de Rham ring presentation}), it suffices to let $b$ be one of the generators of $I$.

Since $(h(S),s(h(S)))$ is in the image of $\tau$, so is
\[
S' = \bigg(h(S),s(h(S))\bigg)-\tau(S) = \bigg(0,s(h(S))-\dr(S)\bigg).
\]
Writing $b = s(h(S))-\dr(S)$ as $b = (\sum_i b^it^i)z^2$, we have $b^1 = 1$ by assumption. Since $b \in I$ we get $h^0(b) = 0$, hence $b^0 = 0$, so we get $b^4 = 0$, $b^3 = 1$ by Poincar\'e duality. Finally, $\chi(b) = 0$ gives $b^2 = 2$. We conclude that
\[
S' = (0,(t+2t^2+t^3)z^2).
\]
Replacing $S$ by $T$ in this argument, we get an element of $I_3$ for which $h^2$ is odd. By \autoref{Rmk generated by a threefold} any such class can be taken as the second generator for $I$.
\end{proof}
\vskip-\lastskip
This gives all linear congruences between Hodge numbers and de Rham numbers.

\begin{Thm}
The only universal congruences between the Hodge numbers $h^{i,j}(X)$ and the de Rham numbers $\hdR^i(X)$ of smooth proper varieties $X$ of dimension $n$ over $k$ are the congruences spanned by
\begin{itemize}
\item Serre duality: $h^{i,j}(X) = h^{n-i,n-j}(X)$;
\item Poincar\'e duality: $\hdR^i(X) = \hdR^{2n-i}(X)$;
\item Components: $h^{0,0}(X) = \hdR^0(X)$;
\item Euler characteristic: $\sum_{i,j} (-1)^{i+j}h^{i,j}(X) = \sum_i (-1)^i\hdR^i(X)$;
\item Parity of middle cohomology: $\hdR^n(X) \equiv 0 \pmod 2$ if $n$ is odd.\hfill\qed
\end{itemize}
\end{Thm}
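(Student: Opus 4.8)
The plan is to turn the statement into the computation of an annihilator and then to compare two lattices prime by prime. First I would use \autoref{Thm Hodge-de Rham ring presentation}, which shows $\HDR_*$ is generated by classes of varieties: since $h$ and $\dr$ are multiplicative, the degree $n$ piece $\HDR_n$ is the $\Z$-span of the classes $(h(X),\dr(X))$ of $n$-dimensional varieties $X$ (the products of the generators). Writing $M_n = \Z^{(n+1)^2}\oplus\Z^{2n+1}$ for the lattice of all formal Hodge and de Rham numbers in dimension $n$, a relation $\sum\lambda_{i,j}h^{i,j}(X)+\sum\mu_i\hdR^i(X)\equiv 0\pmod m$ holds for every such $X$ if and only if the functional $(\lambda,\mu)$ kills $\HDR_n$. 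Thus the universal congruences mod $m$ are exactly $\operatorname{Hom}(M_n/\HDR_n,\Z/m)$, and it remains to identify this group.

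Next I would check that the five listed relations are universal congruences, i.e.\ that each kills $\HDR_n$: Serre and Poincar\'e duality hold on $\H_n$ and $\DR_n$ by definition, the components and Euler-characteristic identities are precisely the two equations cutting out $\HDR_n\subseteq\H_n\times\DR_n$, and the parity congruence holds because $\DR_n$ is defined so that $\hdR^n$ is even when $n$ is odd. For the converse I would first show that $Q:=M_n/\HDR_n$ is \emph{torsion-free}. The saturation of $\HDR_n$ replaces $\DR_n$ by the full Poincar\'e-symmetric lattice $P\supseteq\DR_n$; but for $n$ odd every $a\in\H_n$ has even Euler characteristic (Serre duality pairs $(i,j)$ with $(n-i,n-j)$, which carry the same sign, and there is no fixed index), while $\chi(b)\equiv\hdR^n(b)\pmod 2$ for $b\in P$, so the Euler condition $\chi(a)=\chi(b)$ forces $\hdR^n(b)$ even and hence $b\in\DR_n$. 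Thus the saturation gains nothing and $Q$ is free; in particular the parity congruence is already a consequence of the others (explicitly, $\hdR^n$ is congruent mod $2$ to the sum of the Euler functional with all the Serre and Poincar\'e functionals), which is why listing it does no harm.

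With $Q$ free of rank $R=(n+1)^2+(2n+1)-\big(r_n+(n+1)-2\big)$, the four equality-type relations are $\Q$-independent (Serre lives in the Hodge factor, Poincar\'e in the de Rham factor, and components and Euler are independent of these and of each other), and by \autoref{Def r_n} together with the rank $n+1$ of $\DR_n$ their number equals $R$; hence they span the rational annihilator. To upgrade this to every modulus I would invoke \autoref{Lem free of the same rank} for the inclusion of the relation lattice $\Lambda$ into the full integral annihilator $\HDR_n^\perp=Q^*$: it suffices to prove that the $R$ relation vectors stay independent modulo every prime $\ell$, which I would verify by exhibiting an explicit unit-determinant basis of $\HDR_n^\perp$ built from the Serre pairs, the Poincar\'e pairs, components, and Euler, in the spirit of the Jacobian check in \autoref{Thm Hodge ring presentation}.

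The hard part will be this last integral step at the prime $\ell=2$. Here the torsion-freeness of $Q$ is exactly what is needed: it guarantees there is no mod-$2$ defect beyond the four equality relations, so that their reductions already span $\HDR_n^\perp\otimes\F_2$ and the parity congruence, though recorded for transparency as the defining evenness of middle de Rham cohomology, is recovered from them. Making the mod-$2$ independence fully explicit — equivalently, pinning down that the relation determinant is a unit and isolating the single evenness phenomenon at the centre of the Hodge and de Rham diamonds — is the crux of the argument.
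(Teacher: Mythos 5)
Your proposal is correct and follows the same route as the paper, which treats this theorem as an immediate consequence of \autoref{Thm Hodge-de Rham ring presentation}: since $\HDR_n$ is the $\Z$-span of classes $(h(X),\dr(X))$ of $n$-dimensional varieties, the universal congruences mod $m$ are exactly the functionals killing $\HDR_n$ mod $m$, and $\HDR_n$ is by construction cut out by the listed relations. The paper records no further argument, whereas you supply the linear algebra explicitly; in particular your observation that the parity congruence is implied by the other four families on the lattice they cut out (so that $M_n/\HDR_n$ is torsion-free and the listing of the parity relation is harmless but redundant) is correct and worth keeping. The only criticism is that the step you defer as ``the crux'' --- independence of the relation vectors modulo every prime $\ell$, including $\ell=2$ --- is in fact routine and should not be left open. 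In a dependence $\sum_P a_P S_P+\sum_Q b_Q P_Q+c\cdot\mathrm{Comp}+d\cdot\mathrm{Eul}\equiv 0 \pmod \ell$, the dual vector $f_n^*$ of $\hdR^n$ occurs only in the Euler functional (the Poincar\'e pairs omit the fixed index $n$), forcing $d\equiv 0$; then the coefficient of $e_{n,n}^*$ forces $a_{S_{(0,0)}}\equiv 0$, the coefficient of $e_{0,0}^*$ forces $c\equiv 0$, and the remaining Serre and Poincar\'e relations have pairwise disjoint supports, so all coefficients vanish. Thus there is no special difficulty at $\ell=2$, and with that short verification added (and the degenerate case $n=0$ set aside, where the components and Euler conditions coincide) your argument is complete.
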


\begin{Thm}\label{Thm linear relations Hodge-de Rham}
The only universal linear relations between the Hodge numbers $h^{i,j}(X)$ and the de Rham numbers $\hdR^i(X)$ of smooth proper varieties $X$ of dimension $n$ over $k$ are the relations spanned by
\begin{itemize}
\item Serre duality: $h^{i,j}(X) = h^{n-i,n-j}(X)$;
\item Poincar\'e duality: $\hdR^i(X) = \hdR^{2n-i}(X)$;
\item Components: $h^{0,0}(X) = \hdR^0(X)$;
\item Euler characteristic: $\sum_{i,j} (-1)^{i+j}h^{i,j}(X) = \sum_i (-1)^i\hdR^i(X)$. \hfill\qed
\end{itemize}
\end{Thm}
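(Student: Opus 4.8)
The plan is to deduce the statement formally from the presentation of the Hodge--de Rham ring in \autoref{Thm Hodge-de Rham ring presentation}, where all of the geometric content resides. Fix $n$, and package the Hodge and de Rham numbers of an $n$-dimensional variety into a single coefficient vector $v(X) = (h(X),\dr(X))$, living in the ambient lattice $\Z^{(n+1)^2}\oplus\Z^{2n+1}$ and in fact in $\H_n \times \DR_n$. A universal linear relation is then exactly a $\Q$-linear functional on this coefficient space that vanishes on $v(X)$ for every $X \in \Var_k$ of dimension $n$; that is, it is an element of the annihilator of the $\Q$-span $W_n$ of $\{v(X) : \dim X = n\}$. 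So the entire statement reduces to identifying this annihilator, and the first task is to pin down $W_n$ precisely.

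For that I would use that by \autoref{Thm Hodge-de Rham ring presentation} the ring $\HDR_*$ is generated by varieties. Since $h$ and $\dr$ are multiplicative and a product of smooth proper geometrically integral $k$-schemes is again one, the set $\{v(X)\}$ is closed under multiplication and contains the unit $1 = v(\pt)$; hence the subring it generates is already its additive span, and ``generated by varieties'' says exactly that this additive span is all of $\HDR_*$. Reading off the degree-$n$ part — noting that a product $X_1 \times \cdots \times X_r$ of total dimension $n$ is itself an $n$-dimensional variety — I conclude that the integral span of $\{v(X) : \dim X = n\}$ equals $\HDR_n$, and therefore $W_n = \HDR_n \otimes \Q$.

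It then remains to compute the annihilator of $\HDR_n \otimes \Q$, which is a routine duality calculation. By definition $\HDR_n \otimes \Q$ is the intersection of $(\H_n \times \DR_n)\otimes\Q$ with the two hyperplanes cut out by the Components functional $h^{0,0}-h^0$ and the Euler-characteristic functional $\chi(a)-\chi(b)$. Since the annihilator of an intersection of subspaces is the sum of their annihilators, the annihilator of $\HDR_n\otimes\Q$ splits into four pieces: the annihilator of $(\H_n)_\Q$, spanned by the Serre-duality functionals $h^{i,j}-h^{n-i,n-j}$; the annihilator of $(\DR_n)_\Q$, spanned by the Poincar\'e-duality functionals $h^i - h^{2n-i}$; and the two lines spanned by the Components and Euler-characteristic functionals. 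These are precisely the four families in the statement, and each of them manifestly vanishes on every $v(X)$, so together they span the space of universal relations.

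The one point that requires care is the passage from $\Z$ to $\Q$ in the middle step: over $\Q$ the parity constraint ``$h^n$ even when $n$ is odd'' from the definition of $\DR_n$ imposes no linear condition and so contributes nothing to the annihilator — which is exactly why the parity relation features in the congruence version of the theorem but disappears here. Beyond this bookkeeping there is no real obstacle, since all the difficulty has been front-loaded into \autoref{Thm Hodge-de Rham ring presentation}, namely into producing the surface $S$ and threefold $T$ that hit the generators of the kernel $I$ of \autoref{Lem kernel of chi times h^0}; the present statement is its formal shadow.
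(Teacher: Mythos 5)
Your proposal is correct and follows exactly the route the paper intends: the paper leaves this theorem as an immediate consequence of \autoref{Thm Hodge-de Rham ring presentation} (hence the \qed on the statement itself), and your argument --- that multiplicativity of $(h,\dr)$ plus surjectivity of $\tau$ identifies the $\Q$-span of the vectors $(h(X),\dr(X))$ in degree $n$ with $\HDR_n\otimes\Q$, whose annihilator is then the sum of the Serre-duality, Poincar\'e-duality, components, and Euler-characteristic functionals --- is precisely the intended deduction, including the observation that the parity constraint on $\hdR^n$ disappears over $\Q$.
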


\phantomsection
\printbibliography
\end{document}